\documentclass[11pt,A4paper]{amsart}
\usepackage{amsfonts, amsmath, amssymb, amsthm, color}

\usepackage{hyperref}
\hypersetup{
  pdfborder={0 0 0},
  colorlinks   = true, %Colours links instead of ugly boxes
  urlcolor     = blue, %Colour for external hyperlinks
  linkcolor    = blue, %Colour of internal links
  citecolor   = red %Colour of citations
}

\hoffset -1.45cm
\textheight=8.5in
\textwidth=6.2in
\voffset -1.8cm
\tolerance=9000 \emergencystretch=5pt %\vfuzz=2pt
\parskip=1.2mm

%\usepackage[notref,notcite]{showkeys}
%\renewcommand*\showkeyslabelformat[1]{\normalfont\tiny \ttfamily(#1)}
%

% THEOREM Environments ---------------------------------------------------
\newtheorem{thm}{Theorem}[section]
\newtheorem{prop}[thm]{Proposition}
\newtheorem{lemma}[thm]{Lemma}
\newtheorem{cor}[thm]{Corollary}

\theoremstyle{definition}

\newtheorem{question}{Question}

\theoremstyle{remark}

\newtheorem{rem}[thm]{Remark}

\newcommand{\ga}{\Gamma}
\newcommand{\pc}{{\rm Pc}}
\newcommand{\C}{{\rm C}}
\newcommand{\No}{{\rm N}}

\DeclareMathOperator{\link}{link}
\DeclareMathOperator{\supp}{supp}
\DeclareMathOperator{\cd}{cd}

\newcommand{\cA}{\mathcal{A}}

\newcommand{\bF}{\overline{F}}
\newcommand{\bG}{\overline{G}}

\newcommand{\N }{\mathbb N}
\newcommand{\Z }{\mathbb Z}
\newcommand{\F}{\mathbb{F}}

\def\coloneq{\mathrel{\mathop\mathchar"303A}\mkern-1.2mu=}

%%%%%%%%%%%%%%%%%%%%%%%%%% To avoid ambiguities

%\renewcommand{\le }{\leq}
%\renewcommand{\ge }{\geq}

%%%%%%%%%%%%%%%%%%%%%%%%%%%

\begin{document}

\title{On subgroups of right angled Artin groups with few generators}

\author{Ashot Minasyan}
\address{Mathematical Sciences,
University of Southampton, Highfield, Southampton, SO17 1BJ, United
Kingdom.}
\email{aminasyan@gmail.com}
%
%\date{}

\begin{abstract} For each $d \in \N$ we construct a $3$-generated group $H_d$, which is a subdirect product of free groups, such that the cohomological dimension of $H_d$ is $d$.
Given a group $F$ and a normal subgroup $N \lhd F$ we prove that any right angled Artin group containing the special HNN-extension of $F$ with respect to $N$ must also contain $F/N$.
We apply this to construct, for every $d \in \N$, a $4$-generated group $G_d$, embeddable into a right angled Artin group, such that the cohomological dimension of $G_d$ is $2$ but
the cohomological dimension of any right angled Artin group, containing $G_d$, is at least $d$. These examples are used to show the non-existence of certain ``universal'' right angled Artin groups.

We also investigate finitely presented subgroups of direct products of limit groups. In particular we show that for every $n\in \N$ there exists $\delta(n) \in \N$ such that any $n$-generated finitely presented subgroup of a direct
product of finitely many free groups embeds into the $\delta(n)$-th direct power of the free group of rank $2$. As another corollary we derive that any $n$-generated finitely presented residually free group embeds into
the direct product of at most $\delta(n)$ limit groups.
\end{abstract}

\keywords{Subgroups of right angled Artin groups, partially commutative groups, special HNN-extensions, cohomological dimension, residually free groups.}
\subjclass[2010]{20F36, 20F05, 20E06, 20E05}

\maketitle
%\vspace{-0.5cm}
%\tableofcontents
%%%%%%%%%%%%%%%%%%%%%%%%%%%%%%%%%%%%%%%%%%%%%%%%%%%%%%%%%%%%%%%%%%%%%%%%%%%%%%%%%%%%%%%%%%%%%%
%%%%%%%%%%%%%%%%%%%%%%%%%%%%%%%%%%%%%%%%%%%%%%%%%%%%%%%%%%%%%%%%%%%%%%%%%%%%%%%%%%%%%%%%%%%%%%

\section{Introduction}
A \emph{right angled Artin group}, also called a \emph{graph group} or a \emph{partially commutative group} in the literature, is a group which has a finite presentation, where the only permitted defining
relators are commutators of the generators. To get such a group, one normally starts with any finite simplicial graph $\ga$, with vertex set $V\ga$ and edge set $E\ga$, and defines the associated right angled
Artin group $A=A(\ga)$ by the presentation $$A=\langle V\ga \,\|\, [u,v]=1, \, \forall \{u,v\}\in E\ga \rangle.$$

The structure of these groups and their subgroups has been a subject of intensive study in the recent years. The results of Haglund, Wise and Agol \cite{H-W,Wise-QH,Agol} show that many
previously studied groups (e.g., one-relator groups with torsion, limit groups, fundamental groups of closed hyperbolic $3$-manifolds) virtually embed into right angled Artin groups. Thus the class of subgroups of
right angled Artin groups is rather rich. However, the first theorem about right angled Artin groups, proved by Baudisch in \cite{Baud}, asserts that a $2$-generated\footnote{We say that a group $G$ is \emph{$n$-generated},
for some $n \in \N$, if $G$ has a generating set of cardinality at most $n$.}  subgroup of a right angled Artin group is either free or free abelian.
In particular, there exists a single right angled Artin group (e.g., $\Z*\Z^2$) which contains all two-generated subgroups of right angled Artin groups. In \cite{C-K} Casals-Ruiz and Kazachkov ask whether a similar fact is true for
$n$-generated subgroups of right angled Artin groups:

\begin{question}[{\cite[Question 1]{C-K}}]\label{q:0} Given $n \in \N$, does there exist a ``universal'' right angled Artin group which contains all $n$-generated subgroups of arbitrary right angled Artin groups?
\end{question}

Let $\cA$ denote the class of all groups which are isomorphic to subgroups of (finitely generated) right angled Artin groups.
Motivated by Question \ref{q:0}, in this note we show that there are no ``universal'' right angled Artin groups for various subclasses of the class $\cA$.
We start with the following statement, where $\mathbb{F}_2$ denotes the free group of rank $2$, $\mathbb{F}_2^d$ denotes its $d$-th direct power and $\cd(H)$ denotes the cohomological dimension of a group $H$
over the integers.

\begin{prop}\label{prop:H_d} For every $d \in \N$ there exists a $3$-generated subgroup $H_d \leqslant \mathbb{F}_2^d$ such that $H_d$ contains the free abelian group of rank $d$ as a subgroup and $\cd(H_d)=d$.
\end{prop}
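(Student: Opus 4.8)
The plan is to construct $H_d$ as a subgroup of $\mathbb{F}_2^d$ generated by three explicit elements, chosen so that a copy of $\Z^d$ can be read off inside it. Observe first that once $\Z^d\leqslant H_d\leqslant\mathbb{F}_2^d$ is established, the value of the cohomological dimension follows for free: $\cd$ is monotone under passage to subgroups, $\cd(\Z^d)=d$, and $\cd(\mathbb{F}_2^d)\le d$ since $\mathbb{F}_2^d$ is a direct product of $d$ groups of cohomological dimension $1$; hence $\cd(H_d)=d$. So all of the work is in the construction. Write $\mathbb{F}_2=\langle a,b\rangle$ and assume $d\ge 2$ (the case $d=1$ is immediate, taking $H_1=\mathbb{F}_2$).

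I would take
$$g_1=(a,\dots,a),\qquad g_2=(b,\dots,b),\qquad g_3=(z_1,\dots,z_d)\ \text{ with }\ z_k:=a^kba^{-k},$$
and set $H_d:=\langle g_1,g_2,g_3\rangle$. Consider $w_k:=g_3\,g_1^{k}g_2^{-1}g_1^{-k}\in H_d$: in coordinate $m$ it equals $z_mz_k^{-1}$, so it is trivial in coordinate $k$. For each $j\in\{1,\dots,d\}$, enumerate $\{1,\dots,d\}\setminus\{j\}$ as $k_1,\dots,k_{d-1}$ and form the left-normed iterated commutator $c_j:=[w_{k_1},w_{k_2},\dots,w_{k_{d-1}}]\in H_d$ (for $d=2$ this is just $c_j=w_{k_1}$). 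Because a left-normed iterated commutator with any entry equal to $1$ is trivial, $c_j$ is trivial in every coordinate $m\ne j$; thus $c_j$ is supported on the single coordinate $j$, where it equals $[z_jz_{k_1}^{-1},\dots,z_jz_{k_{d-1}}^{-1}]$. Granting that this is a nontrivial element of $\mathbb{F}_2$, the $c_1,\dots,c_d$ have pairwise disjoint supports (distinct singletons) and each has infinite order, so $\langle c_1,\dots,c_d\rangle=\langle c_1\rangle\times\cdots\times\langle c_d\rangle\cong\Z^d$ sits inside $H_d$, completing the proof.

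The one genuine point, and the place where the particular choice of the $z_k$ is used, is the following claim: \emph{for every $j$, the family $\{z_jz_k^{-1}:k\ne j\}$ freely generates a free subgroup of $\mathbb{F}_2$ of rank $d-1$.} Granting this, $[z_jz_{k_1}^{-1},\dots,z_jz_{k_{d-1}}^{-1}]$ is the image, under an embedding, of the nontrivial reduced word $[y_1,\dots,y_{d-1}]$ of the free group on $y_1,\dots,y_{d-1}$, hence is nontrivial, which is exactly what was used above. To prove the claim I would conjugate the whole family simultaneously by $a^{-j}$: this turns $z_jz_k^{-1}$ into $[b,a^{\,k-j}]=b\,a^{\,k-j}b^{-1}a^{\,j-k}$, so it suffices to see that $\{[b,a^{\,m}]:m\in M_j\}$ is a free basis of a free group, where $M_j=\{\,j-k:k\ne j\,\}$ is a set of $d-1$ nonzero integers. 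Now $[b,a^{\,m}]=t_0t_m^{-1}$ where $t_m:=a^mba^{-m}$, and $\{t_m:m\in\Z\}$ is the standard Schreier free basis of $\ker(\mathbb{F}_2\to\Z)$ for the homomorphism $a\mapsto 1$, $b\mapsto 0$; hence $\{t_m:m\in M_j\cup\{0\}\}$ freely generates a free subgroup $L_j\le\mathbb{F}_2$. Applying to $L_j$ the retraction that kills $t_0$ and fixes the other $t_m$ sends the $d-1$ elements $t_0t_m^{-1}$ ($m\in M_j$) to the free basis $\{t_m^{-1}:m\in M_j\}$ of a free group of rank $d-1$; since a surjection of a free group of rank $d-1$ onto a free group of rank $d-1$ carrying a basis to a basis is an isomorphism, it follows that $\{t_0t_m^{-1}:m\in M_j\}$ is a free basis of a rank-$(d-1)$ free group, proving the claim.

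The main obstacle is precisely engineering the $z_k$ so that all $d$ of the "difference families" $\{z_jz_k^{-1}:k\ne j\}$ are simultaneously free; seemingly simpler candidates such as $z_k=a^kb$ or $z_k=ba^k$ fail, because then the differences $z_jz_k^{-1}$ all lie in one cyclic subgroup of $\mathbb{F}_2$ and every $c_j$ collapses to $1$. The choice $z_k=a^kba^{-k}$ works because the differences become, up to conjugacy, the transparent elements $[b,a^{\,m}]=t_0t_m^{-1}$, whose freeness is visible directly from the Schreier basis of a single well-understood free subgroup of $\mathbb{F}_2$.
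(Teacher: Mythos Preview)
Your proof is correct and is essentially the paper's construction rendered in coordinates: the paper defines $H_d$ as the image of $F(x,y,z)$ under the map to $\mathbb{F}_2^d$ sending $x,y$ to the diagonal copies and $z$ to $(w_1,\dots,w_d)$ for an arbitrary free family $w_1,\dots,w_d\in\langle x,y\rangle$, and exhibits $\Z^d\leqslant H_d$ via the same iterated-commutator trick, using that $w_j^{-1}w_1,\dots,\widehat{w_j^{-1}w_j},\dots,w_j^{-1}w_d$ freely generate. The only differences are cosmetic: your freeness argument is longer than needed (since you already observe that the $z_k=t_k$ are part of a Schreier basis, a single Nielsen move gives freeness of $\{z_jz_k^{-1}:k\neq j\}$ directly, as in the paper), and there is a harmless sign slip in your definition of $M_j$ (it should be $\{k-j:k\neq j\}$ to match $[b,a^{k-j}]$).
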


Since $\mathbb{F}_2^d$ is a right angled Artin group, Proposition \ref{prop:H_d} shows that the answer to  Question~\ref{q:0} is negative already for $n=3$.
Indeed any group containing all $H_d$, $d \in \N$, must have infinite cohomological dimension, but it is well-known
that the cohomological dimension of a right angled Artin group $A$, associated to a finite graph $\ga$,
is finite and  equals to the clique number of $\ga$\footnote{This can be derived from the fact that $A$ is isomorphic to the fundamental group of a non-positively curved Salvetti cube complex whose dimension is precisely
the clique number of $\ga$ -- see \cite[Sec. 3.6]{Charney}.} (in particular, $\cd(A) \le |V|$).
In other words, $$\cd(A)=\max\{|U| \mid U \subseteq V\ga \mbox{ such that } \{u,v\} \in E\ga \mbox{ for all distinct } u,v \in U\}.$$

It is now natural to wonder whether imposing a bound on the cohomological dimension, in addition to the bound on the number of generators, would help. Namely, one can ask whether for every $n \in \N$  there is a single right
angled Artin group which contains all $n$-generated groups $G \in \cA$ with $\cd(G) \le n$.  To show that this is not the case, we employ special HNN-extensions.

Let $F$ be a group and let $N \leqslant F$ be any subgroup. The \emph{special HNN-extension of $F$ with respect to $N$} is the group $G$ defined by the (relative) presentation
\begin{equation}\label{eq:spec-HNN}
G=\langle F,t \,\|\, tht^{-1}=h,~\forall\,h \in N \rangle.
\end{equation}

If $G$ and $A$ are groups then we will write $G \hookrightarrow A$ to say that $G$ can be
isomorphically embedded into $A$.

\begin{thm} \label{thm:rope} Suppose that $F \in \cA$ and $N \lhd F$ is a normal subgroup such that $F/N \in \cA$. Let $G$ be the special HNN-extension of $F$ with respect to $N$, defined by presentation \eqref{eq:spec-HNN}.
Then
\begin{itemize}
  \item[(i)] $G \in \cA$;
  \item[(ii)] if $A$ is any right angled Artin group such that $G \hookrightarrow A$ then $F/N \hookrightarrow A$.
\end{itemize}
\end{thm}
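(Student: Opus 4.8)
The plan is to handle the two parts separately; part (i) is a short direct construction, while part (ii) is the substantial one and contains the main obstacle.

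\smallskip
\noindent\textbf{Part (i).} Using $F\in\cA$, fix a right angled Artin group $A_1$ and an embedding $\iota_1\colon F\hookrightarrow A_1$; and, using that $F/N\in\cA$ and $\Z\in\cA$ force $(F/N)\ast\Z\in\cA$ (the class $\cA$ is closed under free products), fix a right angled Artin group $A_2$ and an embedding $\iota_2\colon (F/N)\ast\Z\hookrightarrow A_2$. The assignment $t\mapsto 1$, $f\mapsto f$ $(f\in F)$ defines a retraction $\pi_F\colon G\twoheadrightarrow F$, since every relator $tht^{-1}h^{-1}$ is sent to $1$; and $N\lhd G$ (as $N\lhd F$ and $tNt^{-1}=N$), with $G/N$ presented by the generators of $F/N$ together with the image of $t$ and no relations between them, i.e. $G/N\cong (F/N)\ast\Z$. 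Write $\pi_N\colon G\twoheadrightarrow G/N\cong(F/N)\ast\Z$ for this quotient map and set $\Phi:=(\iota_1\circ\pi_F)\times(\iota_2\circ\pi_N)\colon G\to A_1\times A_2$. Then $\ker\Phi=\ker\pi_F\cap\ker\pi_N$; since $\ker\pi_N=N\leqslant F$ while $\pi_F$ is the identity on $F$, this intersection lies in $F\cap\ker\pi_F=\{1\}$, so $\Phi$ embeds $G$ into the right angled Artin group $A_1\times A_2$, proving (i).

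\smallskip
\noindent\textbf{Part (ii).} Assume $G\hookrightarrow A=A(\ga)$ and identify $G$ with its image. First I would record that $C_G(t)\cap F=N$ (immediate from Britton's Lemma for $G=F\ast_N$: if $f\in F\setminus N$ then $tft^{-1}$ is reduced of $t$-length $2$, so $[t,f]\ne1$), and hence that $N\leqslant C_A(t)$ because $[t,N]=1$ holds in $A$ as well. The strategy is to produce a homomorphism $\rho\colon A\to A$ whose restriction to $G$ coincides with $\pi_N\colon G\twoheadrightarrow G/N\cong(F/N)\ast\Z$ from part (i); this would give $(F/N)\ast\Z\cong\rho(G)\leqslant A$ and hence $F/N\hookrightarrow A$. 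After conjugating $G$ inside $A$ I may assume $t$ is cyclically reduced, so that by Servatius' Centralizer Theorem $C_A(t)\cong\Z^{r}\times A(\Lambda)$, where $\Lambda=\link(\supp t)$, the $\Z^{r}$-factor is generated by root elements supported on $\supp t$, and $t$ lies in this $\Z^{r}$-factor; in particular $N\leqslant C_A(t)\leqslant A(\supp t\cup\Lambda)$. For $\rho$ I would take the canonical retraction $\pi_W\colon A\to A(W)$ onto the full subgraph spanned by $W:=V\ga\setminus(\supp t\cup\Lambda)$ (the complement of the closed star of $\supp t$): since $t$ and $N$ are both supported on $\supp t\cup\Lambda$ they are killed, so $\rho|_G$ factors through $\pi_N$ and $\rho(G)=\rho(F)$ is a quotient of $F/N$.

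\smallskip
The key point, and the step I expect to be the main difficulty, is to show that $\rho$ collapses nothing beyond $\pi_N$, i.e. that $\rho$ is injective on $G/N$, equivalently that $F\cap\llangle\supp t\cup\Lambda\rrangle_A=N$. This is where the hypothesis $N\lhd F$ should be used essentially: it forces $\iota(F)$ to normalise $\iota(N)$ (so that $G\leqslant N_A(\iota(N))$), and this, together with the product decomposition of $C_A(t)$ and an analysis of reduced forms in $A$, is what should bound $F\cap\llangle\supp t\cup\Lambda\rrangle_A$ above by $N$. A reassuring special case is $F$ abelian, where the whole of (ii) is immediate: then $f\mapsto[t,f]$ is a genuine homomorphism $F\to G$ with kernel $C_F(t)=N$ and image isomorphic to $F/N$, already lying inside $G\leqslant A$. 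For non-abelian $F$ this commutator map is only a crossed homomorphism vanishing on $N$, and the remaining work is to linearise it on a suitable $F$-coinvariant quotient of the subgroup it generates and to realise that quotient back inside $A$ — which is exactly the purpose of the retraction $\rho$ above.
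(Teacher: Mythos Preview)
Your argument for (i) is correct and is essentially the paper's construction: the paper embeds $G$ into $A\times(B*\langle t\rangle)$ via $f\mapsto(f,\phi(f))$, $t\mapsto t$ and verifies injectivity by a normal-form computation, whereas you verify it by intersecting $\ker\pi_F$ with $\ker\pi_N=N$. These are the same embedding, checked in two equivalent ways.

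For (ii), however, the retraction you propose does not work, and the gap you flag cannot be closed along the lines you indicate. Take $F=\F_2=\langle a,b\rangle$, $N=\{1\}$, so $G=F*\langle t\rangle\cong\F_3$, and embed $G$ into $A=\F_2=\langle x,y\rangle$ as the index-$2$ subgroup with free basis $t\mapsto x^2$, $a\mapsto y$, $b\mapsto xyx^{-1}$. Here $\pc_\ga(G)=A$, $\supp_\ga(t)=\{x\}$, $\Lambda=\link_\ga(\{x\})=\emptyset$, and $W=\{y\}$; your retraction $\pi_W$ sends both $a$ and $b$ to $y$, so it kills $ab^{-1}\in F\setminus N$ and $\pi_W(F)\cong\Z\not\cong F/N=\F_2$. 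The point is that the closed star of $\supp_\ga(t)$ is tied to $t$, not to $N$: beyond the containment $N\leqslant C_A(t)$ it carries no information about $N$, and in general $F\cap\llangle\supp_\ga(t)\cup\Lambda\rrangle_A$ is strictly larger than $N$.

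The paper's fix is to retract away from the parabolic closure of $N$ rather than from the star of $\supp_\ga(t)$. After first replacing $A$ by $\pc_\ga(G)$ (a reduction you omit), one uses $N\lhd G$ to get $G\leqslant\No_A(\pc_\ga(N))$; since the normaliser of a parabolic $hA_Xh^{-1}$ is the parabolic $hA_{X\cup\link_\ga(X)}h^{-1}$, and since this normaliser contains $G$ whose parabolic closure is all of $A$, one concludes $V=X\sqcup\link_\ga(X)$, i.e.\ $\pc_\ga(N)=A_X$ is an honest direct factor of $A$. Retracting onto the complementary factor $A_Y$, $Y=\link_\ga(X)$, the remaining work is to show $F\cap A_X=N$; this is where Servatius' description of $C_A(t)$ and the normality of $N$ in $F$ are actually used. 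You gesture at normalisers in your final paragraph, but the object that makes the argument go through is $\pc_\ga(N)$, not $\supp_\ga(t)\cup\Lambda$.
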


The main technical result, used in the proof of Theorem \ref{thm:rope}, is Proposition \ref{prop:emb_quot} below, which implies that any right angled Artin group containing the special
HNN-extension \eqref{eq:spec-HNN}, with $N \lhd F$, must also contain the quotient $F/N$.

Given any $d \in \N$, let us apply Theorem \ref{thm:rope} to the case when $F$ is the free group of rank $3$ and $N \lhd F$  is the normal subgroup such that $F/N \cong H_d$, where $H_d$ is the group from Proposition~\ref{prop:H_d}.
Then the special HNN-extension $G_d$, of $F$ with respect to $N$, will be generated by $4$ elements and will belong to the class $\cA$ by Theorem \ref{thm:rope}.
And for any right angled Artin group $A$ with $G_d \hookrightarrow A$, we will have
$H_d \hookrightarrow A$. In particular, $\cd(A) \ge \cd(H_d)=d$. On the other hand, we see that $\cd(G_d)=2$ because $G_d$ is not free and acts on a tree with free vertex stabilizers (cf. \cite[Ex. 4 in Ch. VIII.2]{Brown}).
Thus we obtain the following corollary:

\begin{cor}\label{cor:G_d} For every $d \in \N$ there exists a $4$-generated group $G_d \in \cA$ such that $\cd(G_d)=2$ but for any right angled Artin group $A$, containing a copy of $G_d$, one has $\cd(A) \ge d$.
\end{cor}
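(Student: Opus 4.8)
The plan is to feed the group $H_d$ from Proposition~\ref{prop:H_d} into Theorem~\ref{thm:rope}, exactly as sketched in the paragraph preceding the statement. Fix $d\in\N$ and let $H_d\leqslant\mathbb{F}_2^d$ be the $3$-generated group provided by Proposition~\ref{prop:H_d}, so that $\cd(H_d)=d$ and $H_d$ contains a copy of $\Z^d$. Since $H_d$ is $3$-generated, I would fix a free group $F$ of rank $3$ together with an epimorphism $F\twoheadrightarrow H_d$, and set $N\coloneq\ker(F\twoheadrightarrow H_d)\lhd F$, so that $F/N\cong H_d$. Then both hypotheses of Theorem~\ref{thm:rope} hold: $F\in\cA$ because every free group embeds into a right angled Artin group, and $F/N\cong H_d\in\cA$ because $H_d$ is a subgroup of the right angled Artin group $\mathbb{F}_2^d$.

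Next, let $G_d$ be the special HNN-extension of $F$ with respect to $N$, defined by presentation~\eqref{eq:spec-HNN}. It is generated by the three generators of $F$ together with the stable letter $t$, hence $G_d$ is $4$-generated, and $G_d\in\cA$ by Theorem~\ref{thm:rope}(i). If $A$ is any right angled Artin group with $G_d\hookrightarrow A$, then Theorem~\ref{thm:rope}(ii) yields $H_d\cong F/N\hookrightarrow A$; since cohomological dimension over $\Z$ does not increase under passage to subgroups, we get $\cd(A)\ge\cd(H_d)=d$, which establishes the second assertion of the corollary.

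It then only remains to compute $\cd(G_d)$. For the upper bound, $G_d$ acts on its Bass--Serre tree with every vertex and edge stabilizer conjugate into the free group $F$, hence free, so $\cd(G_d)\le\cd(F)+1=2$ (cf.\ \cite[Ex.~4 in Ch.~VIII.2]{Brown}). For the lower bound it suffices to produce a copy of $\Z^2$ inside $G_d$: the subgroup $N$ is non-trivial (if $N=1$ then $H_d\cong F$ would be free of rank $3$, which is impossible for $d\ge2$ since $H_d\supseteq\Z^d$, and is excluded for $d=1$ by the form of $H_1$), so choosing any $1\ne n\in N$, which automatically has infinite order, the elements $n$ and $t$ commute in $G_d$ and generate a subgroup isomorphic to $\Z^2$, whence $\cd(G_d)\ge\cd(\Z^2)=2$ and so $\cd(G_d)=2$. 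The argument is essentially just an assembly of Proposition~\ref{prop:H_d} and Theorem~\ref{thm:rope}, so I foresee no real obstacle; the only steps requiring a moment's care are these two bounds for $\cd(G_d)$ — in particular verifying $N\ne1$ and that the HNN-splitting has free vertex groups — both of which are routine consequences of standard results on the cohomological dimension of groups acting on trees.
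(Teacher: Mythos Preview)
Your proposal is correct and follows exactly the route sketched in the paper: take $F$ free of rank $3$ surjecting onto $H_d$, set $N$ to be the kernel, apply Theorem~\ref{thm:rope}, and use the Bass--Serre tree for the upper bound on $\cd(G_d)$. The only cosmetic difference is in the lower bound: the paper simply notes that $G_d$ is not free (hence $\cd(G_d)\ge 2$ by Stallings--Swan), whereas you explicitly exhibit a copy of $\Z^2$; both arguments are standard and equally short.
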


The above corollary shows there there exists no ``universal'' right angled Artin group containing all  $4$-generated groups from $\cA$ of cohomological dimension $2$.

The next naturally arising question could ask whether for a given $n \in \N$ there are right angled Artin groups containing copies of all finitely presented $n$-generated groups $G \in \mathcal A$.
This question seems to be more subtle. The groups $H_d$ from Proposition \ref{prop:H_d} are not finitely presented (see Remark \ref{rem:not_fp} below), and a result of Bridson and Miller \cite{B-M} implies that
finitely presented subdirect products of free groups, intersecting each of the factors non-trivially, are virtually surjective on pairs, which shows that they are quite scarce.
In \cite{B-H-M-S} Bridson, Howie, Miller and Short extended this result to finitely presented subgroups in direct products of finitely generated fully residually free groups (a.k.a. limit groups).
We use this generalization to prove the following theorem.

\begin{thm} \label{thm:adeq_free} There exists a computable function $\delta: \N \to \N$ such that the following holds.
Suppose that $F_1,\dots,F_d$ are limit groups and $H \leqslant F_1 \times \dots \times F_d$ is a finitely presented subgroup such that $H \cap F_i \neq \{1\}$ for each $i=1,\dots,d$.
If $H$ is $n$-generated then $d \le \delta(n)$.
\end{thm}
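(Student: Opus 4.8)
The plan is to bound $d$ by the first Betti number $b_{1}(H):=\dim_{\mathbb{Q}}H_{1}(H;\mathbb{Q})$, which is at most $n$ because the free rank of the abelianization cannot exceed the number of generators. First I would reduce to the case where $H$ is a subdirect product: on replacing each $F_{i}$ by its image $p_{i}(H)$ under the $i$-th coordinate projection, the factors remain $n$-generated limit groups (a finitely generated subgroup of a limit group is again a limit group, and $p_{i}(H)$ is a quotient of $H$) and the hypothesis $H\cap F_{i}\neq\{1\}$ is preserved. Put $N_{i}:=H\cap F_{i}\lhd F_{i}$ and $Q_{i}:=F_{i}/N_{i}$. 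If $Q_{i}=\{1\}$ for some $i$, then $F_{i}\leqslant H$, hence $H\cong F_{i}\times p_{\widehat{i}}(H)$ and that factor may be discarded, decreasing $d$ by one while retaining all hypotheses; so, inducting on $d$, I may assume $Q_{i}\neq\{1\}$ for every $i$.

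Next I would run a homological estimate. The subgroups $N_{1},\dots,N_{d}$ all lie in $H$, pairwise commute, and pairwise intersect trivially, so $N_{1}\times\dots\times N_{d}\leqslant H$. Pushing this inclusion forward under the natural map $H\to F_{1}^{\mathrm{ab}}\times\dots\times F_{d}^{\mathrm{ab}}$, whose image is a quotient of $H^{\mathrm{ab}}$, shows that this image contains $\overline{N}_{1}\times\dots\times\overline{N}_{d}$, where $\overline{N}_{i}\leqslant F_{i}^{\mathrm{ab}}$ is the image of $N_{i}$. Since $F_{i}^{\mathrm{ab}}/\overline{N}_{i}\cong Q_{i}^{\mathrm{ab}}$ and free rank is additive on finitely generated abelian groups,
\[
n\ \geq\ b_{1}(H)\ \geq\ \sum_{i=1}^{d}\operatorname{rank}\bigl(\overline{N}_{i}\bigr)\ =\ \sum_{i=1}^{d}\bigl(b_{1}(F_{i})-b_{1}(Q_{i})\bigr).
\]
A non-trivial limit group surjects onto $\mathbb{Z}$, so $b_{1}(F_{i})\geq1$ for each $i$; hence the theorem follows once one proves that each index is \emph{non-degenerate}, i.e. $b_{1}(Q_{i})<b_{1}(F_{i})$, or equivalently that $\overline{N}_{i}$ is infinite --- for then $d\leq n$.

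Excluding degenerate indices is the technical heart, and it is precisely here that the finite presentation of $H$ enters. If $F_{i}$ is abelian the assertion is clear: then $F_{i}^{\mathrm{ab}}=F_{i}$ and $\overline{N}_{i}=N_{i}$ is a non-trivial subgroup of a torsion-free group. For non-abelian $F_{i}$ one argues by contradiction. A degenerate $N_{i}$ has finite image in $F_{i}^{\mathrm{ab}}$, so it is infinitely generated (a non-trivial finitely generated normal subgroup of a non-abelian limit group has finite index, hence infinite image in the abelianization), and then $Q_{i}$ fails to be finitely presentable; one may first strip off such a factor $i$ in the case when $Q_{i}$ does happen to be finitely presentable, replacing $H$ by the still finitely presented, $n$-generated, full-support subdirect product $H/N_{i}\leqslant\prod_{j\neq i}F_{j}$ (which has strictly fewer degenerate indices) and iterating. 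In the remaining situation one invokes the theorem of Bridson--Howie--Miller--Short that a finitely presented subdirect product of limit groups with full support is virtually surjective on pairs, together with the fact that for two factors virtual surjectivity to pairs is equivalent to finite presentability: projecting $H$ to appropriate pairs of factors (and through the $N_{j}$'s) exhibits a finitely generated, non-finitely-presentable subdirect product of two limit groups that is nonetheless virtually surjective on pairs --- a contradiction. Carrying along the finitely many factors stripped off in this process yields $d\leq\delta(n)$ for an explicitly computable $\delta$.

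The main obstacle is this last step: extracting, from the finite presentation of $H$ via the Bridson--Howie--Miller--Short theorem, the statement that $H\cap F_{i}$ is homologically non-trivial in $F_{i}$. Finite presentation cannot be dropped --- the groups $H_{d}$ of Proposition~\ref{prop:H_d} are $3$-generated subdirect products of arbitrarily many free groups --- so the whole point is how type $FP_2$ constrains the subdirect structure.
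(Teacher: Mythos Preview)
Your approach is genuinely different from the paper's and, if it worked, would yield the much stronger linear bound $d\le n$ (the paper explicitly asks whether a polynomial or linear bound exists). The paper instead separates the abelian and non-abelian factors, bounds the abelian ones by $n$ via a rank argument, and bounds the non-abelian ones using Proposition~\ref{prop:vsp}: from VSP one gets that $|H:N_iN_j|<\infty$ for all pairs, and then a pigeonhole argument combined with the Golod--Shafarevich inequality shows that there cannot be more than $\gamma(n)$ such epimorphisms $H\twoheadrightarrow \mathbb{F}_2$. No attempt is made to show that $H\cap F_i$ is homologically visible in $F_i$.

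The gap in your proposal is exactly the step you flag, and your sketch for closing it does not work. First, the implication ``$N_i$ infinitely generated $\Rightarrow Q_i$ not finitely presentable'' is false: take $F_i=\mathbb{F}_2$ and $N_i=[\mathbb{F}_2,\mathbb{F}_2]$, so $Q_i=\mathbb{Z}^2$ is finitely presented while $N_i$ is infinitely generated and $\overline{N}_i=0$. Second, even granting the ``strip off a factor with $Q_i$ finitely presented'' move (which, incidentally, does keep $H/N_i$ finitely presented, since subdirectness makes $N_i$ normally finitely generated in $H$ once it is so in $F_i$), you never bound the number of stripped factors, so nothing prevents all $d$ indices from being degenerate with finitely presented $Q_i$. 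Third, the proposed contradiction at the end does not arise: for a subdirect product of \emph{two} limit groups, VSP simply means finite index, hence automatically finitely presented --- so ``VSP but not finitely presented on a pair'' is impossible rather than something you can exhibit from a bad $Q_i$. In short, you have not ruled out degenerate non-abelian indices, and it is not clear that the assertion $\operatorname{rank}(\overline{N}_i)\ge 1$ is even true in general for finitely presented $H$; the paper's Golod--Shafarevich route sidesteps this question entirely.
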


In Remark \ref{rem:delta_estimate} we give an estimate $\delta(n) < 2^{18n^3}$ for all $n \in \N$. This estimate is not sharp, and it would be interesting to see whether there exists a polynomial (or even a linear)
upper bound for $\delta(n)$ in terms of $n$.

Theorem \ref{thm:adeq_free} implies that no finitely presented analogues of the groups $H_d$ from Proposition~\ref{prop:H_d} can be found among subgroups of direct product of free groups:

\begin{cor} \label{cor:univ_for_free} Given any $n \in \N$, every $n$-generated finitely presented subgroup $H$ of a direct product of finitely many free groups can be embedded into $\F_2^{\delta(n)}$.
In particular, $\cd(H) \le \delta(n)$.
\end{cor}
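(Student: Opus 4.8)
The plan is to deduce this corollary directly from Theorem \ref{thm:adeq_free} together with Proposition \ref{prop:H_d}-style embedding arguments. Let $H$ be an $n$-generated finitely presented group which embeds into a direct product $F_1 \times \dots \times F_m$ of finitely many free groups. First I would reduce to the case where each projection of $H$ to $F_i$ is nontrivial: simply discard those factors $F_i$ for which $H \cap F_i$ is trivial after replacing each $F_i$ by the image of $H$ under the $i$-th coordinate projection (which is again free, being a subgroup of a free group), so that without loss of generality $H$ is a subdirect product and $H \cap F_i \neq \{1\}$ for every $i$. Strictly speaking one should be slightly careful here: passing to the projection $\overline{F_i}$ of $H$ makes $H$ subdirect, and then $H \cap \overline{F_i}$ is a nontrivial normal subgroup of $\overline{F_i}$ precisely when the projection of $H$ to $\prod_{j \neq i} \overline{F_j}$ has nontrivial kernel; if that kernel is trivial we can delete the $i$-th factor without changing $H$, and iterating this finite process we arrive at a subdirect embedding of $H$ into a product of $d$ free groups with $H \cap F_i \neq \{1\}$ for all $i$.

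Now apply Theorem \ref{thm:adeq_free} (with the $F_i$ being free groups, which are in particular limit groups): since $H$ is $n$-generated and finitely presented, we get $d \le \delta(n)$. It remains to embed $F_1 \times \dots \times F_d$ into $\F_2^{\delta(n)}$. Each $F_i$ is a finitely generated free group, and a finitely generated free group of any rank embeds into $\F_2$ (e.g.\ via the standard embedding $\F_k \hookrightarrow \F_2$ sending the generators to $a, bab^{-1}, \dots, b^{k-1}ab^{-(k-1)}$). Taking direct products, $F_1 \times \dots \times F_d \hookrightarrow \F_2^d \hookrightarrow \F_2^{\delta(n)}$, where the last embedding is inclusion of the first $d$ coordinates (padding with trivial factors). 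Composing with the embedding $H \hookrightarrow F_1 \times \dots \times F_d$ gives $H \hookrightarrow \F_2^{\delta(n)}$, as desired.

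Finally, for the cohomological dimension statement: $\F_2^{\delta(n)}$ is a right angled Artin group whose defining graph is the complete multipartite-type graph on $\delta(n)$ commuting pairs — more precisely it is $A(\ga)$ where $\ga$ is the join of $\delta(n)$ edges, which has clique number $\delta(n)$ — hence $\cd(\F_2^{\delta(n)}) = \delta(n)$ by the formula quoted in the introduction (or simply because $\F_2^{\delta(n)}$ has a classifying space of dimension $\delta(n)$, namely the $\delta(n)$-fold product of wedges of circles). Since cohomological dimension is monotone under taking subgroups, $\cd(H) \le \cd(\F_2^{\delta(n)}) = \delta(n)$. The only genuine content here is Theorem \ref{thm:adeq_free}; the main (very mild) obstacle in writing this out carefully is the bookkeeping in the first paragraph, ensuring that after deleting redundant factors one really does obtain a subdirect product meeting every factor nontrivially, so that Theorem \ref{thm:adeq_free} applies with the $d$ one needs.
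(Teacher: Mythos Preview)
Your proof is correct and follows the same approach as the paper's (delete redundant direct factors so that $H$ meets every remaining factor nontrivially, apply Theorem~\ref{thm:adeq_free} to bound the number of factors by $\delta(n)$, then embed each finitely generated free factor into $\F_2$). One minor slip: the defining graph of $\F_2^{\delta(n)}$ is the join of $\delta(n)$ pairs of \emph{non-adjacent} vertices, not edges---the join of $\delta(n)$ edges would give $\Z^{2\delta(n)}$---but your alternative classifying-space argument (the $\delta(n)$-fold product of wedges of two circles) correctly yields $\cd(\F_2^{\delta(n)})=\delta(n)$.
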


We would also like to mention another corollary of Theorem \ref{thm:adeq_free} concerning residually free groups, which may be of independent interest.
It is known that every finitely generated residually free group $G$ can be embedded in the direct product of finitely  many limit groups (see Baumslag, Myasnikov and Remeslennikov \cite{B-M-R},
Kharlampovich and Myasnikov \cite{Kh-M}, or Sela \cite{Sela}).
In the case when $G$ is finitely presented Theorem~\ref{thm:adeq_free} can be used to give a bound on the number of factors:

\begin{cor}\label{cor:res_free} Let $G$ be an $n$-generated  finitely presented residually free group, for some $n \in \N$. Then $G$ can be embedded into the direct product of at most $\delta(n)$ limit groups.
\end{cor}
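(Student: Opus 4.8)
The plan is to reduce, by an elementary argument, to a situation where Theorem~\ref{thm:adeq_free} applies directly. First I would invoke the structure theory of finitely generated residually free groups: by the work of Baumslag, Myasnikov and Remeslennikov \cite{B-M-R}, Kharlampovich and Myasnikov \cite{Kh-M}, and Sela \cite{Sela}, since $G$ is finitely generated and residually free there exist limit groups $F_1,\dots,F_d$, for some $d \in \N$, and an embedding $G \hookrightarrow F_1 \times \dots \times F_d$. I would identify $G$ with its image and regard each $F_i$ as the corresponding direct factor of the product.

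Next I would make this embedding ``efficient'' by deleting redundant factors: as long as there is an index $i$ with $G \cap F_i = \{1\}$, discard the $i$-th factor. This is legitimate because the coordinate-forgetting projection $q \colon F_1 \times \dots \times F_d \to \prod_{j \neq i} F_j$ has kernel equal to $F_i$, so $\ker(q|_G) = G \cap F_i = \{1\}$, and hence $q|_G$ embeds $G$ into the direct product of the remaining $d-1$ limit groups. Each such step strictly decreases the number of factors, so after finitely many steps we arrive at an embedding $G \hookrightarrow F_1 \times \dots \times F_m$ (after relabelling), with $m \le d$, in which each $F_i$ is a limit group and $G \cap F_i \neq \{1\}$ for every $i \in \{1,\dots,m\}$. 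Throughout this process the abstract group $G$ is unchanged, so it remains $n$-generated and finitely presented.

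Finally I would apply Theorem~\ref{thm:adeq_free} to the resulting embedding: $G$ is a finitely presented, $n$-generated subgroup of the direct product of the limit groups $F_1,\dots,F_m$ that meets each factor nontrivially, so $m \le \delta(n)$. Thus $G$ embeds into a direct product of at most $\delta(n)$ limit groups, as claimed (and if $m=0$ the group $G$ is trivial and there is nothing to prove). I do not expect any genuine obstacle here: all of the difficulty is already absorbed into Theorem~\ref{thm:adeq_free} (and, upstream of it, into the Bridson--Howie--Miller--Short theorem on finitely presented subgroups of products of limit groups); the only point requiring care is the passage to an embedding that meets every direct factor nontrivially, which is the standard observation recalled above.
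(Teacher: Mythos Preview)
Your proposal is correct and follows essentially the same approach as the paper: embed $G$ into a finite direct product of limit groups via \cite{B-M-R,Kh-M,Sela}, discard the factors that meet $G$ trivially, and then apply Theorem~\ref{thm:adeq_free}. The paper's argument is stated in one sentence, but the content is identical to what you wrote.
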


Using the commutative transitivity of limit groups it is easy to show that for any $d \in \N$  the residually free group $H_d$ from Proposition \ref{prop:H_d} cannot be embedded into the direct product of less than $d$ limit groups.
Thus it is not possible to remove the assumption that $G$ is finitely presented in Corollary \ref{cor:res_free}.

Unfortunately the characterization of finitely presented subdirect product of free groups given by Bridson and Miller in \cite{B-M} does not extend to finitely presented subgroups of more general right angled Artin groups.
And while Theorem \ref{thm:rope} can be used to construct some finitely presented examples (see Corollary \ref{cor:fp_ex} in Section \ref{sec:5}), the following question remains open.

\begin{question}\label{q:1} Do there exist finitely presented analogues of groups $H_d$ from Proposition \ref{prop:H_d}? More precisely, does there exist $n \in \N$ such that for any $d \in \N$
there is an $n$-generated finitely presented group $Q_d \in \cA$ which contains the free abelian subgroup of rank $d$ (and/or which has $\cd(Q_d)=d$)?
\end{question}

A celebrated theorem of Higman, Neumann and Neumann \cite{H-N-N} states that any countable group $H$ can be embedded into a $2$-generated group $G$; moreover, one can take $G$
to have the same number of defining relations as $H$. It is natural to ask if there is an analogue of this statement within the class $\cA$:

\begin{question}\label{q:2} Does there exist $n \in \N$ such that every group $H \in \cA$ can be embedded into some $n$-generated group $G \in \cA$? Can one choose such $G$ to be finitely presented,
in addition to being generated by $n$ elements?
\end{question}

Note that by the theorem of Baudisch \cite{Baud}, mentioned above, the number $n$ in Question \ref{q:2} must be at least $3$.

\section{Proof of Proposition \ref{prop:H_d}}
\begin{proof}[Proof of Proposition \ref{prop:H_d}] Let $F=F(x,y,z)$ be the free group of rank $3$ with the free generating set $\{x,y,z\}$. Given any $d \in \N$, choose any elements $w_1,\dots,w_d \in \langle x,y \rangle\leqslant F$
which freely generate a free subgroup of rank $d$ in $\langle x,y \rangle \cong\mathbb{F}_2$. For each $i=1,\dots,d$, define the normal subgroup $L_i \lhd F$ as the normal closure of the element $z^{-1}w_i$, and let
$$K_i\coloneq \bigcap_{j=1, j \neq i}^d L_j \lhd F. $$
By construction, for every $i$ the quotient $F/L_i$ is canonically isomorphic to the rank $2$ free subgroup $\langle x,y \rangle \leqslant F$.
Observe that $K_d \setminus L_d \neq \emptyset $. Indeed, clearly the long commutator $$[z^{-1}w_1,\dots,z^{-1}w_{d-1}] \coloneq [[[z^{-1}w_1,z^{-1}w_2],z^{-1}w_3],\dots,z^{-1}w_{d-1}]$$ belongs to $K_d$, and
its image in $F/L_d \cong \langle x, y \rangle \leqslant F$ is equal to the long commutator $[w_d^{-1}w_1,\dots,w_d^{-1}w_{d-1}]$. The latter is non-trivial in $\langle x,y \rangle \leqslant F$
by the choice of $w_1,\dots,w_d$ (because the elements $w_d^{-1}w_1,\dots,w_d^{-1}w_{d-1}$ freely generate a free subgroup of rank $d-1$),
hence  $[z^{-1}w_1,\dots,z^{-1}w_{d-1}] \in K_d \setminus L_d$ in $F$. In view of  the symmetry, we can deduce that
\begin{equation}\label{eq:K_i-L_i}
K_i \setminus L_i \neq \emptyset \mbox{ in $F$, for each } i \in \{1,\dots,d\}.
\end{equation}

Let $\varphi_i:F \to F/L_i$ denote the canonical epimorphism, and let $\varphi: F \to F/L_1 \times \dots \times F/L_d$ denote the product homomorphism, defined by the formula
$\varphi(f) \coloneq \bigl( \varphi_1(f),\dots,\varphi_d(f)\bigr)$ for all $f \in F$. Finally, we define $H_d \coloneq \varphi(F)$ as the image of $F$ in  $F/L_1 \times \dots \times F/L_d\cong \mathbb{F}_2^d$.

Notice that $\varphi_j(K_i)=\{1\}$ if $j \neq i$ by the definition of $K_i$, and $\varphi_i(K_i) \neq \{1\}$ by \eqref{eq:K_i-L_i} for any $i \in \{1,\dots,d\}$.
It follows that the subgroup $\varphi(K_1 \cdots K_d) \leqslant H_d$ is naturally isomorphic to the direct product $\varphi_1(K_1) \times\dots\times \varphi_d(K_d)$ of non-trivial free groups.
Consequently, $H_d$ contains a copy of $\Z^d$ as a subgroup, and so $\cd(H_d) \ge d$. On the other hand, $\cd(H_d) \le \cd(\mathbb{F}_2^d)=d$, hence $\cd(H_d)=d$, as claimed.
\end{proof}

\section{Finitely presented subdirect products of limit groups}
The goal of this section is to prove Theorem \ref{thm:adeq_free} and Corollaries \ref{cor:univ_for_free},\ref{cor:res_free} formulated in the Introduction.

Let $H$ be a subgroup of the direct product $P\coloneq F_1 \times \dots \times F_m$ of some groups $F_1, \dots,F_m$. Recall that $H$ is called a \emph{subdirect product} if it
projects surjectively onto each of the direct factors.
Following \cite{B-H-M-S} we will say that $H$ is \emph{virtually surjective on pairs} (VSP) if for any pair of distinct indices $i,j \in \{1,\dots,m\}$
the image of $H$ under the canonical projection $\rho_{ij}: P \to F_i \times F_j$ has finite index in $F_i \times F_j$. Let $N_i \coloneq H \cap \ker \rho_i \lhd H$, $i=1,\dots,m$,
where $\rho_i:P \to F_i$ denotes the canonical projection to the $i$-th coordinate.

\begin{lemma}\label{lem:VSP-reform} Let $H\leqslant   F_1 \times \dots \times F_m$ be a subdirect product. Then $H$ is VSP if and only if $|H:N_iN_j|<\infty$ for all $1 \le i <j \le m$.
\end{lemma}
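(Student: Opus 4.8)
The statement relates the VSP condition to the finiteness of indices $|H : N_iN_j|$. The plan is to work one pair $(i,j)$ at a time and show that $|H : N_i N_j| < \infty$ is equivalent to $\rho_{ij}(H)$ having finite index in $F_i \times F_j$. Fix distinct $i,j$. The key observation is that the restriction of $\rho_{ij}$ to $H$ has kernel exactly $N_i \cap N_j = H \cap \ker\rho_i \cap \ker\rho_j$, so $\rho_{ij}(H) \cong H/(N_i\cap N_j)$. Thus we need to compare the index of $\rho_{ij}(H)$ in $F_i\times F_j$ with the index of $N_iN_j/(N_i\cap N_j)$ in $H/(N_i\cap N_j)$.

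First I would identify the images of the two "small" normal subgroups. Since $H$ is subdirect, $\rho_i$ maps $H$ onto $F_i$; I claim $\rho_i(N_j) = \rho_i(H \cap \ker\rho_j)$ is normal in $F_i$, and similarly $\rho_j(N_i) \lhd F_j$. Indeed, $N_j \lhd H$ and $\rho_i(H) = F_i$, so $\rho_i(N_j) \lhd F_i$; likewise for the other. Now under the isomorphism $\rho_{ij}(H) \cong H/(N_i \cap N_j)$, the image of $N_i$ lands inside $\{1\} \times F_j$ (because $N_i \subseteq \ker\rho_i$) and in fact equals $\{1\}\times \rho_j(N_i)$; symmetrically the image of $N_j$ equals $\rho_i(N_j)\times\{1\}$. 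Hence the image of $N_iN_j$ under $\rho_{ij}$ is precisely $\rho_i(N_j) \times \rho_j(N_i)$, a direct product of normal subgroups sitting inside $F_i \times F_j$.

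With this in hand the equivalence becomes a routine index computation. We have
$$[F_i \times F_j : \rho_{ij}(H)] \cdot [\rho_{ij}(H) : \rho_i(N_j)\times\rho_j(N_i)] = [F_i \times F_j : \rho_i(N_j)\times\rho_j(N_i)] = [F_i : \rho_i(N_j)]\cdot[F_j:\rho_j(N_i)].$$
On the other hand, the second factor on the left equals $[H : N_iN_j]$ via the isomorphism $\rho_{ij}(H)\cong H/(N_i\cap N_j)$ carrying $\rho_i(N_j)\times\rho_j(N_i)$ to $N_iN_j/(N_i\cap N_j)$. So if $[H:N_iN_j] < \infty$ for all pairs, then in particular $[F_i:\rho_i(N_j)]$ and $[F_j:\rho_j(N_i)]$ must be handled: here I would invoke that $H$ is subdirect together with the chain $N_iN_j \leqslant \ker\rho_i \cdot N_j$ within $H$, giving $[F_i : \rho_i(N_j)] = [H : (H\cap\ker\rho_i)N_j] \le [H : N_iN_j] < \infty$, and symmetrically $[F_j:\rho_j(N_i)]<\infty$; then the displayed identity forces $[F_i\times F_j : \rho_{ij}(H)] < \infty$. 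Conversely, if $\rho_{ij}(H)$ has finite index, then $[F_i:\rho_i(N_j)]$ and $[F_j:\rho_j(N_i)]$ are finite (as $\rho_i = \rho_i\circ$ restricted through $\rho_{ij}$, so $\rho_i(N_j)$ has finite index iff... — more directly, $[\rho_{ij}(H): \rho_i(N_j)\times\rho_j(N_i)]$ divides $[F_i\times F_j:\rho_i(N_j)\times\rho_j(N_i)]$), whence $[H:N_iN_j]<\infty$. Doing this for every pair $1\le i<j\le m$ gives the lemma.

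The only mildly delicate point — the step I would be most careful about — is the bookkeeping showing $\rho_{ij}(N_iN_j) = \rho_i(N_j)\times\rho_j(N_i)$ and that this is genuinely a direct product (the two factors meet trivially because one lives in $\{1\}\times F_j$ and the other in $F_i\times\{1\}$); once that identification is clean, everything else is elementary index arithmetic and does not use anything special about limit groups.
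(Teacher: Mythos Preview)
Your plan is sound and very close to the paper's argument; both hinge on the identity $[F_i:\rho_i(N_j)]=[H:N_iN_j]$ (the paper phrases this as $\rho_i^{-1}(\rho_i(N_j))\cap H=N_iN_j$, using that $H$ is subdirect). Your direction ``$|H:N_iN_j|<\infty$ implies VSP'' is fine: from $[F_i:\rho_i(N_j)]=[H:N_iN_j]<\infty$ and the symmetric statement for $j$, the containment $\rho_i(N_j)\times\rho_j(N_i)\subseteq\rho_{ij}(H)$ (which you establish with equality $\rho_{ij}(N_iN_j)=\rho_i(N_j)\times\rho_j(N_i)$) gives $|F_i\times F_j:\rho_{ij}(H)|<\infty$. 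This is exactly what the paper does, only you package it with an explicit index product.

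There is, however, a genuine gap in your converse direction ``VSP implies $|H:N_iN_j|<\infty$''. You write that $[\rho_{ij}(H):\rho_i(N_j)\times\rho_j(N_i)]$ divides $[F_i\times F_j:\rho_i(N_j)\times\rho_j(N_i)]$, but the right-hand side is not known to be finite from the VSP hypothesis alone, so this does not conclude. The missing observation---which your truncated sentence ``$\rho_i(N_j)$ has finite index iff\dots'' seems to be reaching for, and which the paper states explicitly---is that
\[
\rho_i(N_j)=\rho_{ij}(H)\cap\bigl(F_i\times\{1\}\bigr).
\]
Once you have this, $|F_i\times F_j:\rho_{ij}(H)|<\infty$ forces $|F_i:\rho_i(N_j)|<\infty$ (intersecting a finite-index subgroup with a subgroup), and then your already-established equality $[H:N_iN_j]=[F_i:\rho_i(N_j)]$ finishes immediately. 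With that one line added, your argument is complete; the displayed index identity is then not needed for this direction.
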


\begin{proof} If $H$ is VSP then $|F_i \times F_j:\rho_{ij}(H)|<\infty$ whenever $i < j$, hence $F_i \cap \rho_{ij}(H)=\rho_{ij}(N_j)=\rho_i(N_j)$ has finite index in $F_i$.
Therefore $\rho^{-1}_i(\rho_i(N_j)) \cap H=N_iN_j$ has finite index in $H$.

On the other hand, if $|H:N_iN_j|<\infty$ for some $0 \le i <j \le m$, then $\rho_{i}(N_iN_j)=\rho_i(N_j)$ has finite index in $\rho_i(H)=F_i$. But $\rho_i(N_j)=\rho_{ij}(N_j) \subseteq \rho_{ij}(H)$ because
$\rho_{ij}(N_j)$ is contained in $F_i$ as $N_j \subseteq \ker \rho_j$. Thus $\rho_{ij}(H)$ contains a finite index subgroup of $F_i$. Similarly, it also contains $\rho_{j}(N_i)$, which has finite index in $F_j$. Hence
$|F_i \times F_j :\rho_{ij}(H)|<\infty$.
\end{proof}

\begin{rem}\label{rem:not_fp}
In \cite{B-M} Bridson and Miller proved that if a finitely presented subdirect product of free groups intersects each of the direct factors non-trivially
then it is VSP\footnote{This is implicit in the proof of \cite[Thm. 4.7]{B-M} and is explicitly stated in the Introduction of  \cite{B-H-M-S}, where it is proved that being VSP is also
sufficient for finite presentability of subdirect products of finitely presented groups.}. It follows that
for any $d \in \N$, the group $H_d$, constructed in the proof of Proposition~\ref{prop:H_d}, is not finitely presented.
Indeed,  if $H_d$ was VSP then  $|H_d/(N_iN_j)|<\infty$ for any $1 \le i < j \le d$ by Lemma~\ref{lem:VSP-reform}. But clearly $H/(N_iN_j) \cong F/(L_iL_j)$ is an infinite group because
it  has a presentation with $3$ generators and two relators (in particular, it surjects onto $\Z$).
\end{rem}

The following elementary observation will be useful.
\begin{lemma}\label{lem:image_of_prod} Suppose that $H$ and $F$ are groups and $\psi_i:H \to F$, $i=1,2$, are two epimorphisms. Assume that $H$ is generated by some subset $X \subseteq H$, and let
$M \lhd F$ denote the normal closure of the subset $\{ \psi_1^{-1}(x)\psi_2(x) \mid x \in X \}$. Then $\psi_1(\ker \psi_2) \subseteq M$; in particular, if one sets $L \coloneq \ker\psi_1\ker\psi_2 \lhd H$, then
$H/L$  maps onto $F/M$.
\end{lemma}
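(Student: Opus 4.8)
The plan is to work in the quotient $F/M$ and to show that $\psi_1$ and $\psi_2$ become the \emph{same} epimorphism there. Let $\pi \colon F \to F/M$ be the canonical projection. First I would note that for every $x \in X$ the element $\psi_1(x)^{-1}\psi_2(x)$ lies in $M = \ker\pi$ by the definition of $M$, so $(\pi\circ\psi_1)(x) = (\pi\circ\psi_2)(x)$. Since $\pi\circ\psi_1$ and $\pi\circ\psi_2$ are homomorphisms $H \to F/M$ that agree on the generating set $X$ of $H$, they coincide; call this common homomorphism $\theta \colon H \to F/M$. It is surjective, being a composite of surjections.

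From this the first assertion is immediate: if $h \in \ker\psi_2$, then $\pi(\psi_1(h)) = \theta(h) = \pi(\psi_2(h)) = 1$, so $\psi_1(h) \in \ker\pi = M$; hence $\psi_1(\ker\psi_2) \subseteq M$. For the ``in particular'' part, I would observe that $\ker\theta = \psi_1^{-1}(M)$ contains $\ker\psi_1$ trivially and contains $\ker\psi_2$ by the inclusion just proved. As the product $L = \ker\psi_1\ker\psi_2$ of these two normal subgroups is again a (normal) subgroup of $H$, it is contained in $\ker\theta$, so $\theta$ descends to a surjection $H/L \to F/M$, which is exactly what is claimed. (Alternatively, one can run the symmetric argument with $\pi\circ\psi_2$ in place of $\pi\circ\psi_1$ and note that $\psi_2(\ker\psi_1)\subseteq M$ as well.)

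I do not expect any genuine obstacle here: the lemma is a short diagram chase. The only two points that merit a word of care are that it is precisely the hypothesis ``$X$ generates $H$'' that promotes the equality $\pi\circ\psi_1 = \pi\circ\psi_2$ from the generators to all of $H$, and that $\ker\psi_1\ker\psi_2$ must first be recognised as an honest (normal) subgroup of $H$ before one can assert that it lies inside the subgroup $\ker\theta$.
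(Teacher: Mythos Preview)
Your argument is correct and is essentially the same as the paper's: both rest on the observation that $\psi_1$ and $\psi_2$ agree modulo $M$ on the generating set $X$, and hence on all of $H$. The paper carries this out by writing $h\in\ker\psi_2$ as a word in $X^{\pm1}$ and replacing each $\psi_2(x_j)$ by $\psi_1(x_j)$ mod $M$, whereas you phrase the same step as the equality of the two composite homomorphisms $\pi\circ\psi_1=\pi\circ\psi_2$; your formulation is a touch cleaner and also makes the ``in particular'' clause (which the paper leaves implicit) immediate via the factorisation of $\theta$ through $H/L$.
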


\begin{proof} If $h \in \ker\psi_2$, then $h=x_1\dots x_k$, for some $x_1,\dots,x_k \in X^{\pm 1}$, and $\psi_2(x_1) \dots \psi_2(x_k)=1$ in $F$. Since $\psi_2(x_j) \equiv \psi_1(x_j)$ (mod $M$), we see that
$\psi_1(h)=\psi_1(x_1)\dots\psi_1(x_k) \in M$. Thus $\psi_1(\ker\psi_2) \subseteq M$, as claimed.
\end{proof}

\begin{prop} \label{prop:vsp} There is a computable function $\gamma: \N \to \N$ such that the following holds. Let
$H$ be a group with a collection of normal subgroups $N_1,\dots,N_m \lhd H$, such that $H/N_i$ maps onto the free group of rank $2$ for each $i=1,\dots,m$, and
$|H:N_iN_j|<\infty$ for all $1\le i<j\le m$. If $H$ is $n$-generated, for some $n \in \N$, then $m \le \gamma(n)$.
\end{prop}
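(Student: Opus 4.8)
The plan is to convert the hypotheses into a statement about a bounded number of ``comparable'' epimorphisms onto $\mathbb{F}_2$ and then to count them. First I would normalise: replacing each $N_i$ by $\ker\phi_i$, where $\phi_i\colon H\to H/N_i\twoheadrightarrow\mathbb{F}_2$ is the composite of the quotient map with a surjection onto $\mathbb{F}_2$, only enlarges $N_i$, so the hypotheses $|H:N_iN_j|<\infty$ are preserved; hence I may assume $H/N_i\cong\mathbb{F}_2$ and work with surjections $\phi_i\colon H\twoheadrightarrow\mathbb{F}_2$ with $N_i=\ker\phi_i$ and $|H:\ker\phi_i\ker\phi_j|<\infty$ for $i\neq j$. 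Fix a generating set $X$ of $H$ with $|X|=n$. Applying Lemma~\ref{lem:image_of_prod} to each pair $(\psi_1,\psi_2)=(\phi_i,\phi_j)$ with $F=\mathbb{F}_2$ produces, for $i<j$, a normal subgroup $M_{ij}\lhd\mathbb{F}_2$ which is the normal closure of at most $n$ elements and satisfies $\phi_i(\ker\phi_j)\subseteq M_{ij}$, together with an epimorphism $H/(\ker\phi_i\ker\phi_j)\twoheadrightarrow\mathbb{F}_2/M_{ij}$. Since $\ker\phi_i\ker\phi_j$ has finite index in $H$, the group $\mathbb{F}_2/M_{ij}$ is finite, and the proof of Lemma~\ref{lem:image_of_prod} shows that $\phi_i$ and $\phi_j$ induce one and the same homomorphism $H\to\mathbb{F}_2/M_{ij}$. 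Setting $M:=\bigcap_{i<j}M_{ij}\lhd\mathbb{F}_2$, still of finite index, all the composites $\theta:=(\mathbb{F}_2\twoheadrightarrow\mathbb{F}_2/M)\circ\phi_i$ coincide; writing $Q_0=\mathbb{F}_2/M$ and $H_0=\ker\theta$, we get that $H_0$ has index $|Q_0|$ in $H$ and contains every $N_i$, that $\phi_i$ restricts to an epimorphism $H_0\twoheadrightarrow M$ with kernel $N_i$, that $M$ is free of finite rank $r:=1+[\mathbb{F}_2:M]$, and (Reidemeister--Schreier) that $H_0$ is generated by at most $1+[\mathbb{F}_2:M](n-1)$ elements.

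The remaining step, which is the heart of the matter, is to bound $m$ in this reduced situation: one has $m$ normal subgroups $N_i$ of the finitely generated group $H_0$, all with quotient isomorphic to the \emph{fixed} free group $M$ and pairwise satisfying $|H_0:N_iN_j|<\infty$. I would attack this by exploiting the free non-abelian structure of $M$, for instance by iterating the above reduction inside $H_0$ (each round replacing the ambient free group $M$ by a larger-rank free group and $H_0$ by a deeper finite-index subgroup) while bookkeeping how the rank of the free quotient, the index, and the number of generators evolve, and arguing that the process cannot continue indefinitely without the number-of-generators bound being violated; alternatively, by showing that, up to automorphisms of $M$, each $\phi_i|_{H_0}$ is determined by its composite with a single explicit finite quotient of $M$ of order bounded by a function of $n$, so that, by pairwise transversality, the $\phi_i|_{H_0}$ give distinct homomorphisms from an explicitly $n$-bounded-generated group into a fixed finite group, of which there are at most (its order)$^{\text{(number of generators)}}$. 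Either way this yields a computable $\gamma$ (and the crude estimate quoted in Remark~\ref{rem:delta_estimate} suggests a finite target group of order roughly $2^{O(n^{2})}$ and $\sim n$ generators).

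I expect the genuine obstacle to be precisely this last step. It is not accessible through abelianisations alone: the conditions ``$H/N_i\cong\mathbb{F}_2$'' and ``$|H:N_iN_j|<\infty$'' force the induced maps $H_{\mathrm{ab}}\otimes\mathbb{Q}\to\mathbb{Q}^2$ to have pairwise distinct kernels that pairwise span $H_{\mathrm{ab}}\otimes\mathbb{Q}$, but a fixed finite-dimensional $\mathbb{Q}$-vector space admits infinitely many codimension-$2$ subspaces that pairwise span it, so no bound on $m$ survives passage to the abelianisation (indeed the same failure persists for the natural homological reformulation in terms of the subdirect product $\Phi(H)\leqslant\mathbb{F}_2^{m}$). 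Thus the argument must use the word structure of $\mathbb{F}_2$ in an essential way, and making the iteration (or the counting argument) terminate with an effective constant is where the real work lies.
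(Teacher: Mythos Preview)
Your reduction step contains a direction error: from $\phi_i\equiv\phi_j\pmod{M_{ij}}$ you cannot conclude that $\phi_i\equiv\phi_j\pmod{M}$ for $M=\bigcap_{k<l}M_{kl}$, since reduction modulo the intersection is a \emph{finer} equivalence. If instead you pass to the normal subgroup generated by all the $M_{ij}$, the composites do agree, but that subgroup may be all of $\mathbb{F}_2$, and in any case its index---hence the rank of your $M$, the index $[H:H_0]$, and the number of generators of $H_0$---depends on $m$. So the detour through $H_0$ is circular for the purpose of bounding $m$ by a function of $n$ alone, and both of your proposed continuations inherit this circularity.

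The substantive gap is the ``heart of the matter'' step: you correctly recognise that a non-abelian tool is required, but you have not identified one. The paper skips the $H_0$-reduction entirely and pigeonholes directly on the tuples $\bigl(\phi_i(x_1),\dots,\phi_i(x_n)\bigr)$ modulo a \emph{fixed} finite-index normal subgroup $D_rF\lhd\mathbb{F}_2$ depending only on $n$ (the $r$-th term of the mod-$2$ Jennings--Zassenhaus filtration). If $m>|\mathbb{F}_2/D_rF|^n=:\gamma(n)$ then two tuples coincide modulo $D_rF$, say for $i<j$, and then all $n$ normal generators of $M_{ij}$ lie in $D_rF$. The missing ingredient is the Golod--Shafarevich inequality: choosing $r$ so that $1-2\tau+n\tau^{r}<0$ for some $\tau\in(\tfrac12,1)$, any quotient of $\mathbb{F}_2$ by the normal closure of $n$ elements of $D_rF$ is infinite, contradicting the finiteness of $\mathbb{F}_2/M_{ij}$. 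Your closing intuition about ``a finite target group of order roughly $2^{O(n^2)}$ and $\sim n$ generators'' is exactly on target, but it is Golod--Shafarevich that singles out which finite quotient to use and why a collision there forces infiniteness; without it neither of your sketched arguments has a termination mechanism.
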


\begin{proof} Let $F$ denote the free group of rank $2$. By the assumptions, for every $i=1,\dots,m$, there exists an epimorphism $\psi_i: H \to F$ whose kernel contains $N_i$.

Suppose that some elements $x_1,\dots,x_n \in H$ generate $H$, and let $u_{ik} \coloneq \psi_i(x_k) \in F$, $k=1,\dots,n$, $i=1,\dots,m$. For any two distinct indices $i,j \in \{1,\dots,m\}$
let $L_{ij}\coloneq \ker\psi_i \ker\psi_j \lhd H$ and let $M_{ij}$ be the normal closure of the subset $\{u_{i1}^{-1}u_{j1},\dots,u_{in}^{-1}u_{jn}\}$ in $F$.
By construction, the group $H/(N_iN_j)$ maps onto the group $H/L_{ij}$, and the latter group maps onto the group $F/M_{ij}$ by Lemma \ref{lem:image_of_prod}. Since $|H/(N_iN_j)|<\infty$
by the assumptions, the group $F/M_{ij}$ must also be finite whenever $1 \le i<j\le m$.

Let $\{D_lF\}_{l=1}^\infty$ denote the Jennings-Zassenhaus filtration of the free group $F$ for the prime $p=2$. Namely, $D_lF\coloneq \{f \in F \mid f-1 \in I^n\}$, where $I$ is the augmentation ideal of the group ring
$\mathbb{K}[F]$ and  $\mathbb{K}$ is the field of two elements. Then $D_lF$ is a normal subgroup of finite index in $F$ for each $l \in \N$, and $F=D_1F \geqslant D_2F \geqslant \dots$.

Now, for the given $n \in \N$ choose $\tau \coloneq 2/3$ and $r\coloneq\lceil 2\log_2(3n)\rceil \in \N$ (so that $n \tau^r<2\tau-1$), and set $\gamma(n)\coloneq |F/D_rF|^n \in \N$.
Let us show that $m \le \gamma(n)$. Arguing by contradiction, suppose that $m>\gamma(n)$. Then, by the pigeon hole principle, there must exist a pair of indices $i,j \in \{1,\dots,m\}$, $i<j$, such that
$u_{ik} \equiv u_{jk}$ (mod $D_rF$) for all $k=1,\dots,n$. It follows that the elements $u_{i1}^{-1}u_{j1},\dots,u_{in}^{-1}u_{jn}$, normally generating $M_{ij}$, are all contained in $D_rF$.
Since $F$ is the free group of rank $2$ and
$$ 1-2 \tau +\sum_{i=1}^n \tau^{r}<0,$$ the Golod-Shafarevich theorem (see \cite{Ershov}) implies that the group $F/M_{ij}$ must be infinite.
This contradiction shows that $m \le \gamma(n)$, finishing the proof of the proposition.
\end{proof}

\begin{rem}\label{rem:gamma_estimate} We can estimate the value of $\gamma(n)$, for the choice of $\tau=2/3$, $r=\lceil 2\log_2(3n)\rceil$ and the field of two elements $\mathbb{K}$, as follows.
Let $Y$ be a generating set for the free group $F$ with $|Y|=2$.
Then, for any $l \in \N$, the vector space $I^l/I^{l+1}$, over $\mathbb K$, is spanned by the elements $(y_1-1)(y_2-1) \cdots(y_l-1)+I^{l+1}$, for all $y_1,\dots,y_l \in Y$ (see the proof of \cite[Lemma 8]{Grig}).
Hence $\dim_\mathbb{K}(I^l/I^{l+1}) \le 2^l$, and so $|I^l/I^{l+1}|\le 2^{2^l}$. Since $|D_lF/D_{l+1}F| \le |I^l/I^{l+1}|$ and $F=D_1F$, we obtain
$$ |F/D_rF|=|D_1F/D_2F| \cdots |D_{r-1}F/D_rF| \le 2^{2^{r}-1} \le 2^{18 n^2-1}. $$
Thus $\gamma(n) \le 2^{18n^3-n}$ for all $n \in \N$.

For any $\varepsilon>0$ by taking the parameter $\tau$ to be very close to $1/2$ (but still larger than 1/2) and choosing an appropriate $r$ we can get a better estimate $\gamma(n) \le 2^{Cn^{2+\varepsilon}-n}$,
where $C$ will be some large constant depending on $\tau$.
\end{rem}

Recall that a group $G$ is said to be \emph{residually free} if for every $g \in G\setminus \{1\}$ there exists a free group $F$ and a homomorphism $\phi:G \to F$ such that $\phi(g) \neq \{1\}$ in $F$.
The group $G$ is \emph{fully residually free} if for every finite subset $S \subseteq G$ there is a homomorphism from $G$ to a free group whose restriction to $S$ is injective.
Finitely generated fully residually free groups are commonly called \emph{limit groups}. These groups play an important role in the study of the first order theory of free groups (see \cite{Kh-M,Sela}).

In \cite{B-H-M-S} Bridson, Howie, Miller and Short proved that a finitely presented subgroup of a direct product of finitely many non-abelian limit groups, which is subdirect and intersects each of the direct factors non-trivially, must be VSP.
We will now use this result to prove Theorem \ref{thm:adeq_free}.

\begin{proof}[Proof of Theorem \ref{thm:adeq_free}] Let $\rho_i: F_1 \times \dots \times F_d \to F_i$ denote the canonical projection onto $F_i$. Since finitely generated subgroups of limit groups are also limit groups,
we can replace each $F_i$ by $\rho_i(H)$ to further assume that $H$ is subdirect.

After renumbering, if necessary, we can suppose that $F_1,\dots, F_s$ are abelian and $F_{s+1},\dots,F_d$ are non-abelian limit groups, for some $s \in \{0,\dots,d\}$.
Let $H_1$ and $H_2$ denote the images of $H$ under the canonical projections to $F_1 \times \dots \times F_s$ and $F_{s+1} \times \dots \times F_d$ respectively.
Since $\{1\} \neq F_i \cap H \subseteq F_i \cap H_1$ for all $i =1,\dots,s$, the rank $r$, of the  free abelian group $H_1$, must be at least $s$.
On the other hand, $r \le n$ because $H_1$ is $n$-generated, as a quotient of $H$, and the free abelian group of rank $r$ cannot be generated by fewer than $r$ elements. Hence $s \le n$.

Note that $H_2$ is finitely presented as a quotient of the finitely presented group $H$ by a finitely generated central subgroup $Z \coloneq H \cap (F_1 \times \dots \times F_s)$ ($F_1 \times \dots \times F_s$ is a
free abelian group of rank at most $ns$, so $Z$ can be generated by no more than $ns$ elements).  By construction, $H_2 \leqslant F_{s+1} \times \dots \times F_d$ is a subdirect product of non-abelian limit groups
intersecting each factor non-trivially. Therefore, by \cite[Thm. D(5)]{B-H-M-S}, $H_2$ must be VSP in $F_{s+1} \times \dots \times F_d$.

From Lemma \ref{lem:VSP-reform} it follows that $|H_2:N_iN_j|<\infty$ for all $s+1 \le i<j \le d$, where $N_i\lhd H_2$ denotes the kernel of the projection onto $F_i$, $i=s+1,\dots,d$.
Since $H_2/N_i \cong F_i$ is a non-abelian limit group for $i=s+1,\dots,d$, it has an epimorphism onto $\F_2$, so all the assumptions of Proposition~\ref{prop:vsp} are satisfied, and we can use it to conclude that $d-s \le \gamma(n)$.
Thus,  $d \le \delta(n)$, where $\delta(n) \coloneq n +\gamma(n)$ for all $n \in \N$, and the theorem is proved.
\end{proof}

\begin{rem}\label{rem:delta_estimate} In view of Remark \ref{rem:gamma_estimate} we can estimate that $\delta(n)\le n+ 2^{18n^3-n} <2^{18n^3}$ for all $n \in \N$.
\end{rem}

If a subgroup $H$, of a direct product $F_1 \times \dots \times F_m$, has trivial intersection with one of the factors, then $H$ is isomorphic to its image under the projection away from this factor.
So we can always find an embedding of $H$ into $F_{i_1} \times \dots \times F_{i_k}$, for some $\{i_1,\dots,i_k\} \subseteq \{1,\dots,m\}$, such that $H \cap F_{i_j} \neq \{1\}$ for every $j=1,\dots,k$.
In view of Theorem \ref{thm:adeq_free}, this observation together with the fact that every finitely generated free group embeds into $\F_2$ yields Corollary~\ref{cor:univ_for_free}. Corollary \ref{cor:res_free} is
obtained similarly,  by first embedding the finitely generated residually free group $G$ into the direct product of finitely many limit groups (which can be done by \cite[Cor. 19]{B-M-R}; see also \cite[Cor. 2]{Kh-M} or \cite[Claim 7.5]{Sela}),
and then removing the redundant direct factors that intersect the image of $G$ trivially.

\section{Background on right angled Artin groups}\label{sec:b-n}
Let $\ga$ be a finite simplicial graph with vertex set $V\ga=V$ and the edge set $E\ga=E$. For any vertex $v \in V$,  the \emph{link} $\link_\ga(v) \subseteq V$ is defined as the set of all vertices of $\ga$ adjacent to $v$
(not including $v$ itself). If $U \subseteq V$ then $\link_\ga(U) \coloneq \bigcap_{u \in U} \link_\ga(u)$. A subset $U \subseteq V$ is \emph{reducible} if it can be decomposed in a disjoint union $U=X \sqcup Y$ of non-empty subsets
$X,Y \subset V$ such that $Y  \subseteq \link_\ga(X)$. If there is no such decomposition, then $U$ is said to be {\it irreducible}.

Let $A=A(\ga)$ be the right angled Artin group, associated to $\ga$.
Then the elements of $V$ can be thought of as generators of $A$, and every element $g \in A$ can be represented by a word $w$ over the alphabet $V^{\pm 1}$. The \emph{support} of $w$
is the set of all vertices $v \in V$ such that either $v$ or $v^{-1}$ appears in $w$. The \emph{length} $|g|_\ga$ and the \emph{support} $\supp_\ga(g)$, of $g \in A$, are defined as the length and the
support of a shortest word over $V^{\pm 1}$ representing $g$ in $A$, respectively. It is a standard fact that $|g|_\ga$ and $\supp_\ga(g)$ only depend on $g$ and are independent of the choice of a shortest word representing it.

Right angled Artin groups are special cases of \emph{graph products of groups}, when all the vertex groups are infinite cyclic (see \cite[Subsec. 2.2]{A-M}
for some background on graph products).

Given any subset $U \subseteq V\ga$, the subgroup $A_U \leqslant A$, generated by $U$, is called a \emph{full subgroup} (or a \emph{special subgroup}) of $A$.
It is not difficult to see that $A_U$ is naturally isomorphic to the right angled Artin group
corresponding to the full subgraph $\ga_U$ of $\ga$, spanned by the vertices from $U$. Moreover, $A_U$ is a retract of $A$, where the \emph{canonical retraction} $\rho_U:A \to A_U$ is defined on the generating set $V$ of $A$
by the formula  $\rho_U(u)=u$ if $u \in U$ and $\rho_U(v)=1$ if $v \in V \setminus U$.

Any conjugate of a full subgroup in $A$ is said to be \emph{parabolic}. Every subset $M \subseteq A$ is contained in a
unique minimal (with respect to inclusion) parabolic subgroup $\pc_\ga(M)$, which is called the \emph{parabolic closure} of $M$ in $A$ (see \cite[Prop. 3.10]{A-M}).

An element $t \in A$ is \emph{cyclically reduced} if it has minimal length in its conjugacy class in $A$. Servatius \cite[Prop. on p. 38]{Serv} proved that for every element $g \in A$
there exist $u,t \in A$ such that $g=utu^{-1}$, $t$ is cyclically reduced and $|g|_\ga=|t|_\ga+2|u|_\ga$.

If $t \in A$ and $H \leqslant A$ then we will write $\C_H(t)$ and $\No_A(H)$ to denote the centralizer of $t$ in $H$ and the normalizer of $H$ in $A$ respectively.

\begin{lemma} \label{lem:centr_of_powers} If $A$ is a right angled Artin group then for any $t \in A$ and $m \in \Z\setminus\{0\}$,  $\C_A(t^m)=\C_A(t)$.
\end{lemma}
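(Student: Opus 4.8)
The plan is to show the two inclusions $\C_A(t)\subseteq \C_A(t^m)$ and $\C_A(t^m)\subseteq \C_A(t)$; the first is trivial since anything commuting with $t$ commutes with any power of $t$. For the reverse inclusion, the natural move is to reduce to the case when $t$ is cyclically reduced. Using Servatius' result, write $t=uru^{-1}$ with $r$ cyclically reduced and $|t|_\ga=|r|_\ga+2|u|_\ga$. Then $t^m=ur^mu^{-1}$, and conjugating everything by $u^{-1}$ turns the claim $\C_A(t^m)=\C_A(t)$ into the claim $\C_A(r^m)=\C_A(r)$. So from now on I may assume $t$ itself is cyclically reduced.

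\textbf{Main step.} With $t$ cyclically reduced, I would like to analyze the centralizer $\C_A(t^m)$. The key structural fact I would invoke is the known description of centralizers of cyclically reduced elements in right angled Artin groups (going back to Servatius, via the "pure factor" or "support" decomposition): if $t$ is cyclically reduced and we write $\supp_\ga(t)=S$, then $S$ decomposes uniquely as $S=S_1\sqcup\cdots\sqcup S_k$, where each $S_i$ is irreducible (as a subset of the full subgraph on $S$, after first peeling off the part of $\supp_\ga(t)$ whose vertices are joined to everything in $\supp_\ga(t)$, which contributes a free-abelian direct factor), $t$ factors correspondingly as a commuting product $t=t_1\cdots t_k$ with $\supp_\ga(t_i)=S_i$, and each $t_i$ is not a proper power. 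Servatius' Centralizer Theorem then gives $\C_A(t)=\langle t_1\rangle\times\cdots\times\langle t_k\rangle \times A_{\link_\ga(S)}$ (possibly absorbing the abelian part into $A_{\link_\ga(S)}$). The crucial observation is that this description depends only on the data $(S, \text{the factorization into non-proper-powers})$, and passing from $t$ to $t^m$ does not change $\supp_\ga(t^m)=S$, does not change which $t_i$ is a proper power (a power $t_i^m$ of a non-proper-power is still "cyclically reduced with the same irreducible support", and its unique maximal root is still $t_i$), and does not change $\link_\ga(S)$. Hence $\C_A(t^m)$ is computed by the same formula, giving $\C_A(t^m)=\langle t_1\rangle\times\cdots\times\langle t_k\rangle\times A_{\link_\ga(S)}=\C_A(t)$.

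\textbf{Alternative, more self-contained route.} If the paper prefers not to quote the full Centralizer Theorem, an alternative is: after reducing to $t$ cyclically reduced, take any $g\in\C_A(t^m)$. Then $\pc_\ga(\{t^m\})$ is normalized by $g$; but $\pc_\ga(\{t^m\})=\pc_\ga(\{t\})$ since $t$ and $t^m$ generate commensurable cyclic groups and parabolic closure is a commensurability invariant of the generated subgroup (it only depends on the minimal parabolic containing the element, and $\supp_\ga$ is unchanged). Working inside the parabolic subgroup $P=\pc_\ga(\{t\})$, which is itself a right angled Artin group in which $t$ is (conjugate to) a cyclically reduced element with full support, one reduces to the case where $t$ has full support in $A$ and is not a proper power; there $\C_A(t^m)=\C_A(t)$ follows because $t^m$ being central-up-to-$g$ forces $g$ into the — now easily described — centralizer of an element of full support.

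\textbf{Expected main obstacle.} The only real subtlety is bookkeeping: making sure the reduction to cyclically reduced $t$ is clean (it is, by direct conjugation), and making sure that taking $m$-th powers genuinely preserves all the invariants feeding Servatius' centralizer description — in particular that $t_i^m$ has the same maximal root $t_i$ and the same irreducible support $S_i$, so the factorization of $t^m$ into commuting non-proper-powers-times-abelian-part is literally $t_1^m\cdots t_k^m$ with the same underlying graph data. Once that is spelled out, the equality $\C_A(t^m)=\C_A(t)$ is immediate from the formula. I expect this to be a short argument, essentially a corollary of Servatius' Centralizer Theorem.
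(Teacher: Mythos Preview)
Your argument via Servatius' Centralizer Theorem is essentially correct (modulo the minor slip that in the decomposition $t=t_1\cdots t_k$ with $\supp_\ga(t_i)=S_i$ the factors $t_i$ need not themselves be non-proper-powers; one should write $t=r_1^{n_1}\cdots r_k^{n_k}$ with $r_i$ the maximal root, and then the centralizer is generated by the $r_i$ and $A_{\link_\ga(S)}$, which indeed does not change when $t$ is replaced by $t^m$). However, the paper takes a much shorter and more elementary route: it simply observes that if $st^ms^{-1}=t^m$ then $(sts^{-1})^m=t^m$, and invokes the \emph{Unique Root property} of right angled Artin groups (if $a^m=b^m$ with $m\neq 0$ then $a=b$) to conclude $sts^{-1}=t$. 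Your approach has the advantage of being constructive and tying directly into the structural description of centralizers that is used elsewhere in the paper, but it imports the full strength of Servatius' theorem for a statement that follows from a strictly weaker fact; the paper's three-line argument isolates exactly the property needed.
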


\begin{proof} Suppose that $s t^m s^{-1}=t^m$ for some $s \in A$. Then $(sts^{-1})^m=t^m$, and since right angled Artin groups  have the Unique Root property (see, for example, \cite[Lemma 6.3]{M-RAAG}),
we can deduce that $sts^{-1}=t$. Thus $\C_A(t^m) \subseteq \C_A(t)$. The reverse inclusion is obvious, hence the lemma is proved.
\end{proof}

\section{Special HNN-extensions in right angled Artin groups}\label{sec:5}
In this section we give a criterion for embedding the special HNN-extension of a group with respect to a normal subgroup into a right angled Artin group. The next lemma uses the obvious fact that the class of right angled Artin groups
contains the infinite cyclic group and is closed under forming free and direct products.
\begin{lemma}\label{lem:spec-HNN-emb} Suppose that $F$ is a subgroup of some right angled Artin group $A$, $N \lhd F$ and $F/N$ is embeddable into some right angled Artin group $B$.
Then the special HNN-extension \eqref{eq:spec-HNN} can be embedded into the right angled Artin group $C\coloneq A \times (B*\langle t \rangle)$, where $\langle t \rangle$ denotes the infinite cyclic group  generated by $t$.
\end{lemma}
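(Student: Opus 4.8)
The plan is to write down an explicit homomorphism from the special HNN-extension $G$ of \eqref{eq:spec-HNN} into $C = A \times (B * \langle t \rangle)$ and verify it is injective. Let $\iota: F \hookrightarrow A$ be the given embedding and let $\pi: F \to F/N \hookrightarrow B$ be the composition of the quotient map with the embedding of $F/N$ into $B$; abusing notation, I will regard $F/N$ as a subgroup of $B$. The idea is that the first coordinate $A$ remembers all of $F$ faithfully, while the second coordinate $B * \langle t \rangle$ accommodates the stable letter $t$ together with the commuting relations: since $\pi$ kills $N$, the relations $tht^{-1} = h$ for $h \in N$ become trivially satisfied in the second coordinate, so $t$ can be sent to the free generator of $\langle t \rangle$.

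First I would define $\phi: G \to C$ on the generators of the relative presentation \eqref{eq:spec-HNN} by $\phi(f) \coloneq (\iota(f), \pi(f)) \in A \times (B * \langle t \rangle)$ for $f \in F$, and $\phi(t) \coloneq (1, t) \in A \times (B * \langle t \rangle)$, where on the right $t$ denotes the free generator of $\langle t \rangle$. To see that $\phi$ is well-defined, I would check that it respects the defining relations $tht^{-1} = h$ for $h \in N$: in the first coordinate this reads $1 \cdot \iota(h) \cdot 1 = \iota(h)$, which is trivially true, and in the second coordinate $\pi(h) = 1$ since $h \in N = \ker\pi$, so the relation $t \cdot 1 \cdot t^{-1} = 1$ holds. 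Hence $\phi$ is a homomorphism.

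Next I would establish injectivity of $\phi$. The group $G$ is an HNN-extension of $F$ with associated subgroups $N$ and $N$ and identity isomorphism, so by Britton's Lemma every element of $G$ has a normal form; moreover $G$ is built from $F$ by stable letters, and any element $g \in G$ can be written as $g = f_0 t^{\varepsilon_1} f_1 t^{\varepsilon_2} \cdots t^{\varepsilon_k} f_k$ with $\varepsilon_i \in \{\pm 1\}$, reduced in the sense of Britton's Lemma (no subword $t f_i t^{-1}$ with $f_i \in N$ and no subword $t^{-1} f_i t$ with $f_i \in N$, recalling that both associated subgroups equal $N$). Suppose $\phi(g) = 1$. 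Looking at the second coordinate, $\pi(f_0)\, t^{\varepsilon_1}\, \pi(f_1) \cdots t^{\varepsilon_k}\, \pi(f_k) = 1$ in $B * \langle t \rangle$; by the normal form theorem for free products, if $k \ge 1$ then this word, if it does not collapse, has nontrivial $t$-exponent pattern — a collapse would require some $\pi(f_i) = 1$ sandwiched between $t^{\varepsilon_i}$ and $t^{\varepsilon_{i+1}}$ with $\varepsilon_i = -\varepsilon_{i+1}$, i.e. $f_i \in \ker\pi = N$ with the surrounding letters forming a pinch, contradicting that the expression for $g$ is Britton-reduced. Hence $k = 0$, so $g = f_0 \in F$, and then the first coordinate gives $\iota(f_0) = 1$, whence $f_0 = 1$ since $\iota$ is injective. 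Therefore $\ker\phi = \{1\}$ and $G$ embeds into $C$. Finally, I would note $C$ is a right angled Artin group because the class of right angled Artin groups contains $\Z \cong \langle t\rangle$ (a single vertex) and is closed under free products (disjoint union of graphs) and direct products (join of graphs), as recalled just before the statement.

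The main obstacle is the injectivity argument, specifically matching the Britton-reduced form of an element of $G$ with a genuinely reduced expression in the free product $B * \langle t \rangle$: one must make sure that no accidental cancellation in the second coordinate occurs other than the ones already forbidden by Britton reducedness in $G$. The key point making this work is that the associated subgroups of the HNN-extension are both exactly $N$ and the identifying isomorphism is the identity, so "$f_i$ lies in an associated subgroup" is literally the same condition as "$\pi(f_i) = 1$", which is what produces the perfect correspondence between pinches in $G$ and collapses in $B * \langle t \rangle$.
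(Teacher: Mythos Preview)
Your proof is correct and follows essentially the same route as the paper's: define the homomorphism $G \to A \times (B * \langle t \rangle)$ by $f \mapsto (f,\pi(f))$, $t \mapsto (1,t)$, and verify injectivity by projecting to the second coordinate and invoking free-product normal forms, falling back on the first coordinate when no stable letters appear. The only cosmetic difference is that the paper uses the normal form $g = f_0 t^{k_1} f_1 \cdots t^{k_n} f_n$ with $k_i \in \Z\setminus\{0\}$ and $f_i \notin N$ for $1 \le i \le n-1$, so that the projected word is immediately reduced in $B * \langle t \rangle$; this sidesteps the small extra step (which you correctly flag) of merging adjacent same-sign $t$-letters that your $\varepsilon_i = \pm 1$ convention necessitates.
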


\begin{proof} Abusing the notation, let us assume that $F \leqslant A$ and $F/N \leqslant B$. Let $\phi:F \to B$ denote the natural homomorphism with $\ker(\phi)=N$ and $\mathrm{im}(\phi)=F/N$.
Clearly the subgroup $\bF\coloneq \{(f,\phi(f)) \mid f \in F\}\leqslant A \times B$ is isomorphic to $F$ and $\bF \cap A=N$. Now, let $\bG$ be the subgroup of $C= A \times (B*\langle t \rangle)$ generated by $\bF$ and $t$.

Evidently, $t$ commutes with $(a,b) \in A \times B$ in $C$ if and only if $b=1$, thus $\C_{\bF}(t)=\bF \cap A=N$. This naturally gives rise to a homomorphism $\psi: G \to \bG$, defined according to
the formula $\psi(f)\coloneq (f,\phi(f))\in \bF$ and $\psi(t)=t$, where $G$ is the special HNN-extension \eqref{eq:spec-HNN}, of $F$ with respect to $N$. Obviously $\psi$ is surjective, so it remains to show that it is injective.
This can be easily done by using normal forms: suppose that $g \in G \setminus \{1\}$. Then $g=f_0t^{k_1}f_1 \cdots t^{k_n}f_n$, where $n \ge 0$,
$f_0, \dots,f_n \in F$, $k_1,\dots,k_n \in \Z \setminus\{0\}$, $f_i \notin N$ for $1 \le i \le n-1$, and $f_0 \neq 1$ if $n=0$.

If $n=0$ then $g=f_0\neq 1$ in $F$, hence $\psi(g)=(f_0,\phi(f_0)) \neq 1$ in $A \times B \leqslant C$.
So, suppose that $n \ge 1 $ and let $\rho: C \to B*\langle t \rangle$ denote the canonical retraction.
Then $\rho(\psi(g))=\phi(f_0)t^{k_1} \phi(f_1) \cdots t^{k_n} \phi(f_n)$ is a non-empty reduced word in the free product $B * \langle t \rangle$, hence $\rho(\psi(g)) \neq 1$.
Therefore $\psi(g) \neq 1$ in $C$, and  $\psi$ is injective.

Thus  $G \cong \bG \leqslant C$, and the lemma is proved.
\end{proof}

Let $A$ be a right angled Artin group associated to a finite graph $\ga$ with vertex set $V=V\ga$. Recall that for any subset $M \subseteq A$, $\pc_\ga(M)$ denotes the
parabolic closure of $X$ in $A$ (see Section~\ref{sec:b-n}).

\begin{prop}\label{prop:emb_quot} Let $F$ be a subgroup of $A$, and let $t \in A$ be an element such that $\pc_\ga(G)=A$, where $G\coloneq \langle F,t \rangle \leqslant A$. Suppose that
$N\coloneq \C_F(t)$ is a normal subgroup of $F$. Then
\begin{itemize}
  \item[(i)] $\pc_\ga(N)$ is a direct factor of $A$. More precisely, $\pc_\ga(N)=A_X$ for some $X \subseteq V$, and $V=X \sqcup \link_\ga(X)$.
  \item[(ii)] $F \cap A_X=N$, hence $F/N$ embeds into $A_Y \leqslant A$, where $Y \coloneq V \setminus X=\link_\ga(X)$.
\end{itemize}
\end{prop}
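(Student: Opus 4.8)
The plan is to read off (i) from the interaction between parabolic closures and normalizers, and to reduce (ii) to a general lemma about centralizers of parabolic--dense subgroups of right angled Artin groups.

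\emph{Part (i).} I would first note that $G=\langle F,t\rangle$ normalizes $\pc_\ga(N)$: since $N\lhd F$, $F$ normalizes $N$; and $t$ normalizes (indeed centralizes) $N$ by the very definition of $N=\C_F(t)$; as $\pc_\ga$ is equivariant under conjugation, it is preserved by any element normalizing $N$. Hence $G\subseteq\No_A(\pc_\ga(N))$. Because the normalizer of a parabolic subgroup is again parabolic, and any parabolic subgroup containing $G$ must contain $\pc_\ga(G)=A$, we get $\No_A(\pc_\ga(N))=A$, i.e.\ $\pc_\ga(N)\lhd A$. Finally, writing $\pc_\ga(N)=gA_Ug^{-1}$, normality gives $\No_A(A_U)=A$; combined with the identification $\No_A(A_U)=A_{U\cup\link_\ga(U)}$ this forces $U\cup\link_\ga(U)=V$, hence $V=U\sqcup\link_\ga(U)$ and $A=A_U\times A_{\link_\ga(U)}$; as $A_U$ is then normal, $gA_Ug^{-1}=A_U$. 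Setting $X:=U$ and $Y:=\link_\ga(X)=V\setminus X$ establishes (i).

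\emph{Part (ii).} The inclusion $N\subseteq F\cap A_X$ is immediate since $N\subseteq F$ and $N\subseteq\pc_\ga(N)=A_X$. For the converse, decompose $t=t_Xt_Y$ according to $A=A_X\times A_Y$, using the canonical retractions, so $t_X=\rho_X(t)$ and $t_Y=\rho_Y(t)$. For $h\in F\cap A_X$ one has $[h,t]=[h,t_X]$ because $h$ commutes with $t_Y\in A_Y$; thus it suffices to show that $t_X$ centralizes $F\cap A_X$ — then every such $h$ satisfies $[h,t]=1$, i.e.\ $h\in\C_F(t)=N$, and moreover $F/N=F/(F\cap A_X)\hookrightarrow A/A_X\cong A_Y$. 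Now $t_X\in\C_{A_X}(N)$ (apply $\rho_X$ to the statement that $t$ centralizes $N\subseteq A_X$), while $F\cap A_X\subseteq\No_{A_X}(N)$ (since $N\lhd F$), and $N$ is parabolic--dense in $A_X$ (from $\pc_\ga(N)=A_X$ and $N\subseteq A_X$ one gets $\pc_{\ga_X}(N)=A_X$). So (ii) reduces to the following lemma: \emph{if $B$ is a right angled Artin group and $M\leqslant B$ satisfies $\pc(M)=B$, then $\C_B(M)\leqslant Z(\No_B(M))$} — equivalently, every element normalizing $M$ commutes with every element centralizing $M$. Applying it with $B=A_X$ and $M=N$ gives exactly that $t_X$ commutes with all of $F\cap A_X$.

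\emph{Proof of the lemma.} I would induct on the number of vertices of the defining graph of $B$. If $B$ is abelian there is nothing to prove. If the defining graph is a join, write $B=B_1\times B_2$ nontrivially and let $M_i$ be the image of $M$ under the projection to $B_i$; projecting onto a direct factor preserves parabolic--density (otherwise $M$ would lie in the proper parabolic $\pc(M_1)\times B_2$ of $B$), so $\pc(M_i)=B_i$. One checks $\C_B(M)=\C_{B_1}(M_1)\times\C_{B_2}(M_2)$ and $\No_B(M)\subseteq\No_{B_1}(M_1)\times\No_{B_2}(M_2)$, and then the claim follows by applying the inductive hypothesis coordinatewise. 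If the defining graph of $B$ is not a join (and has at least two vertices) and $\C_B(M)\neq\{1\}$, pick $c\neq1$ in $\C_B(M)$ and write $c=uc_0u^{-1}$ with $c_0$ cyclically reduced; from $M\subseteq\C_B(c)$ and $\pc(M)=B$ we obtain $B\subseteq\pc_\ga(\C_B(c))=uA_{\supp_\ga(c_0)\cup\link_\ga(\supp_\ga(c_0))}u^{-1}$, so $\supp_\ga(c_0)\cup\link_\ga(\supp_\ga(c_0))=V$; since the graph is not a join this forces $\supp_\ga(c_0)=V$, and then (non-join again, via Servatius's description of centralizers) $\C_B(c_0)$, and hence $\C_B(c)$, is infinite cyclic, generated by a conjugate of a root of $c_0$. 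Consequently $M$ is cyclic and, being parabolic--dense, nontrivial; writing $M=\langle d^{\,l}\rangle$ with $d$ that root--conjugate, Lemma~\ref{lem:centr_of_powers} gives $\C_B(M)=\C_B(d)=\langle d\rangle$, and since no nontrivial element of a right angled Artin group is conjugate to its inverse (these groups are bi-orderable) one also gets $\No_B(M)=\C_B(d)=\langle d\rangle$; thus $\No_B(M)$ is abelian and $\C_B(M)\leqslant Z(\No_B(M))$.

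I expect the main obstacle to be this lemma, and within it the non-join case, which rests on Servatius's centralizer theorem and on the identification of the parabolic closure of a cyclically reduced element with the full subgroup on its support, so some care is needed in quoting these; the join case is then a clean induction once one observes that parabolic--density passes to projections onto direct factors.
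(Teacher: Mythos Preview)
Your Part (i) is essentially the paper's argument: both use that $G$ normalizes $N$, pass to the parabolic closure via the equivariance of $\pc_\ga$ under conjugation, invoke that the normalizer of a parabolic is the parabolic on $U\cup\link_\ga(U)$, and conclude $V=X\sqcup\link_\ga(X)$.

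Your Part (ii) is correct but follows a genuinely different route. The paper conjugates $t$ to be cyclically reduced, applies Servatius's centralizer theorem \emph{to $t$ itself} to write $\C_A(t)=\langle t_1\rangle\cdots\langle t_k\rangle A_S$, and then analyses the projections $\rho_j(N)$ and $\rho_j(F)$ onto the irreducible pieces $A_{T_j}$ of $\supp_\ga(t)$: whenever $\rho_j(N)\neq\{1\}$ it is a nontrivial cyclic normal subgroup of $\rho_j(F)$, forcing $\rho_j(F)\leqslant\langle t_j\rangle$ by Lemma~\ref{lem:centr_of_powers}; assembling these gives $F\cap A_Z\subseteq\C_A(t)$ for a suitable $Z\supseteq X$. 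You instead split $t=t_Xt_Y$ along the direct decomposition $A=A_X\times A_Y$ obtained in (i), reduce to showing $t_X$ centralizes $F\cap A_X$, and isolate this as the abstract statement $\C_B(M)\leqslant Z(\No_B(M))$ for any parabolic--dense $M\leqslant B$. Your inductive proof of that lemma then deploys Servatius's theorem not to $t$ but to an arbitrary nontrivial element of $\C_B(M)$ in the irreducible base case. What this buys you is a clean, reusable lemma of independent interest and an argument that never needs to pass to a cyclically reduced conjugate of $t$; what the paper's approach buys is directness and a shorter path that avoids setting up an induction. Both rely on Servatius and on Lemma~\ref{lem:centr_of_powers} at comparable depth; the logical distance is about the same.
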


\begin{proof} Clearly $N$ is normal in $G$, hence $G \leqslant \No_A(N) \leqslant \No_A(\pc_\ga(N))$, where the second inclusion was proved in \cite[Lemma 3.12]{A-M}.
Suppose that $\pc_\ga(N)=h A_X h^{-1}$ for some $X \subseteq V$ and some $h \in A$. By \cite[Prop. 3.13]{A-M},
$\No_A(hA_Xh^{-1})=hA_{X \cup \link_\ga(X)}h^{-1}$ is a parabolic subgroup of $A$ containing $G$.
Since $\pc_\ga(G)=A$, it follows that $A=hA_{X \cup \link_\ga(X)}h^{-1}=A_{X \cup \link_\ga(X)}$. Therefore $A=A_X \times A_{\link_\ga(X)}$,
$\pc_\ga(N)=A_X$ and $V=X \sqcup Y$, where $Y=\link_\ga(X)$. Thus (i) is proved.

Let us prove (ii) now.
Clearly, after replacing $t$, $F$ and $N$ with their conjugates by the same element, we can assume that $t$ is cyclically reduced in $A$ (note that this does not affect the equality $\pc_\ga(N)=A_X$, as $A_X \lhd A$).
Let $\supp_\ga(t)=T_1 \sqcup \dots \sqcup T_k$ be a decomposition into the disjoint union of non-empty irreducible subsets $T_i$ of $V$, such that $T_i \subseteq \link_\ga(T_j)$ in $\ga$ whenever $i \neq j$.
Then one can write $t=t_1^{n_1} t_2^{n_2} \cdots t_k^{n_k}$, where $t_i \in A_{T_i}\setminus\{1\}$ are not proper powers and $n_i \in\N$ for $i=1,\dots, k$.

In \cite{Serv} Servatius showed that the centralizer, $\C_A(t)$, is generated by $t_1,\dots,t_k$ and $S\coloneq \link_\ga(\supp_\ga(t)) \subseteq V$. In other words,
\begin{equation}\label{eq:centralizer}
\C_A(t)=\langle t_1\rangle \langle t_2 \rangle \cdots \langle t_k \rangle A_S \cong \langle t_1\rangle \times \langle t_2 \rangle  \times \dots  \times \langle t_k \rangle  \times A_S.
\end{equation}

For every $i \in \{1,\dots,k\}$, let $\rho_i:A \to A_{T_i}$ denote the canonical retraction.
Suppose that $\rho_j(N) \neq \{1\}$ for some $j \in \{1,\dots,k\}$.
Evidently, since $N \leqslant \C_A(t)$, we have $\rho_j(N) \leqslant \rho_j(C_A(t))=\langle t_j \rangle$ by \eqref{eq:centralizer}.
Consequently, $\rho_j(N)=\langle t_j^{m_j} \rangle$
for some $m_j \in \Z\setminus\{0\}$. Recalling that $N \lhd F$, we see that $\rho_j(N) \lhd \rho_j(F)$. Since $\rho_j(N) \cong \Z$, as right angled Artin groups are torsion-free, we can use Lemma \ref{lem:centr_of_powers}
to conclude that $\rho_j(N)$ is actually central in  $\rho_j(F)$ (see \cite[Lemma 9.1]{A-M-S} for an alternative proof), thus $\rho_j(F) \leqslant \C_{A_{T_j}}(t_j^{m_j})$. Therefore,
applying Lemma~\ref{lem:centr_of_powers} one more time, we see that $\rho_j(F) \leqslant\C_{A_{T_j}}(t_j)=\langle t_j \rangle$ in $A_{T_j}$.

After renumbering, if necessary, we can assume that there exists $l \in \{0,1, \dots,k\}$ such that $\rho_i(N) \neq \{1\}$ if $1 \le i \le l$, and $\rho_i(N)=\{1\}$ if $l+1 \le i \le k$.
The inclusion $N \leqslant \C_A(t)$, together with \eqref{eq:centralizer}, implies that $\pc_\ga(N) \leqslant A_Z$, where $Z \coloneq \bigcup_{j=1}^l T_j \cup S \subseteq V$.
In the previous paragraph we have shown that $\rho_j(F) \leqslant \langle t_j \rangle$ whenever $1 \le j \le l$. Let $\rho_S:A \to A_S$ denote the canonical retraction. Since
$A_Z$ is canonically isomorphic to the direct product $A_{T_1} \times \dots \times A_{T_l} \times A_S$,  for any $g \in F \cap A_Z$ we have
$$g=\rho_1(g) \cdots \rho_l(g) \rho_S(g) \in \langle t_1 \rangle \cdots \langle t_l \rangle A_S \subseteq \C_A(t).$$ Therefore, $F \cap A_Z \subseteq F \cap \C_A(t)=N$ by the assumption,
yielding that $F \cap A_X=N$, as $A_X=\pc_\ga(N) \leqslant A_Z$. Thus the proof of claim (ii) is complete.
\end{proof}

Special HNN-extensions play an important role in group theory. For example,  Higman used them in the proof of his famous embedding theorem -- see \cite{Higman} and \cite[Ch. IV.7]{L-S}. In particular
he showed that if $F$ is a finitely generated group and $N \lhd F$ is such that the special HNN-extension \eqref{eq:spec-HNN} can be embedded into a finitely presented group then the quotient $F/N$ can also be
embedded into a finitely presented group. Proposition \ref{prop:emb_quot} allows us to obtain the following analogue of the latter statement, where the class of finitely presented groups is replaced by the class of
right angled Artin groups.

\begin{cor}\label{cor:rope-RAAG} Suppose that $F$ is a group, $N \lhd F$ and $G$ is the special HNN-extension \eqref{eq:spec-HNN}, of $F$ with respect to $N$. Then the following are equivalent:
\begin{itemize}
\item[(a)] $F \in \cA$ and $F/N \in \cA$;
\item[(b)] $G \in \cA$.
\end{itemize}
\end{cor}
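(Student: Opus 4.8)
The plan is to prove Corollary \ref{cor:rope-RAAG} by establishing the two implications (a)$\Rightarrow$(b) and (b)$\Rightarrow$(a) separately, drawing on the two major results already set up in this section.

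\medskip

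\emph{Proof sketch.} First I would prove the implication (a)$\Rightarrow$(b). Assuming $F \in \cA$ and $F/N \in \cA$, there are finitely generated right angled Artin groups $A$ and $B$ with $F \hookrightarrow A$ and $F/N \hookrightarrow B$. This is exactly the hypothesis of Lemma \ref{lem:spec-HNN-emb}, which directly yields an embedding of the special HNN-extension $G$ from \eqref{eq:spec-HNN} into the right angled Artin group $C = A \times (B * \langle t \rangle)$. Since $C \in \cA$, we conclude $G \in \cA$, proving (b). (This is, essentially, part (i) of Theorem \ref{thm:rope} without the normality assumption $N \lhd F$ being needed.)

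\medskip

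For the converse (b)$\Rightarrow$(a), suppose $G \in \cA$, so there is a finitely generated right angled Artin group $A$, associated to a finite graph $\ga$, with $G \hookrightarrow A$; identify $G$ with its image, so $F \leqslant G \leqslant A$ and $t \in A$. Since $F \leqslant A$ and $A \in \cA$, we immediately get $F \in \cA$. It remains to show $F/N \in \cA$. The natural candidate is Proposition \ref{prop:emb_quot}, but that proposition requires two things: that $\pc_\ga(G) = A$, and that $N = \C_F(t)$. The second holds because the defining relations of the special HNN-extension \eqref{eq:spec-HNN} force $N \subseteq \C_F(t)$, and conversely any $f \in F$ commuting with $t$ lies in $N$ by the normal form / Britton's lemma argument for HNN-extensions (an element $f \in F \setminus N$ satisfies $tft^{-1} \neq f$ in $G$, hence in $A$); together with the hypothesis $N \lhd F$ this gives $N = \C_F(t) \lhd F$. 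The first condition, $\pc_\ga(G) = A$, need not hold a priori, but this is harmless: replace $A$ by the parabolic subgroup $\pc_\ga(G)$, which is again a right angled Artin group (a conjugate of a full subgroup $A_U$, hence isomorphic to $A(\ga_U)$), and relabel so that $\pc_\ga(G) = A$. Now Proposition \ref{prop:emb_quot}(ii) applies and gives $F \cap A_X = N$ with $F/N \hookrightarrow A_Y \leqslant A$ for $Y = \link_\ga(X)$; since $A_Y$ is a right angled Artin group, $F/N \in \cA$, completing the proof.

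\medskip

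The main obstacle, such as it is, is the bookkeeping needed to legitimately reduce to the situation $\pc_\ga(G) = A$ required by Proposition \ref{prop:emb_quot}, and the verification that $\C_F(t) = N$ exactly (the inclusion $N \subseteq \C_F(t)$ is the defining relation, but the reverse inclusion $\C_F(t) \subseteq N$ uses the HNN-extension structure and must be stated carefully, using that $t$ acts with identity only on $N$). Once these two points are handled, the corollary follows immediately by combining Lemma \ref{lem:spec-HNN-emb} for one direction and Proposition \ref{prop:emb_quot} for the other; indeed this corollary is essentially a restatement of the combination of Theorem \ref{thm:rope}(i) with the embedding $F/N \hookrightarrow A$ from Proposition \ref{prop:emb_quot}(ii), packaged as an "if and only if."
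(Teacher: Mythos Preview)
Your proposal is correct and follows essentially the same route as the paper: Lemma \ref{lem:spec-HNN-emb} for (a)$\Rightarrow$(b), and for (b)$\Rightarrow$(a) passing to the parabolic closure of $G$, invoking Britton's lemma to get $\C_F(t)=N$, and then applying Proposition \ref{prop:emb_quot}. The only point the paper makes slightly more explicit is that after replacing the ambient RAAG by $\pc_{\ga'}(G)$ one must check that the parabolic closure of $G$ in this \emph{new} RAAG is the whole group (because parabolic subgroups of the new group are still parabolic in the old one), but your ``relabel'' step is clearly intending exactly this.
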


\begin{proof} The implication (a)$\Rightarrow$(b) is given by Lemma \ref{lem:spec-HNN-emb}. To prove the reverse implication, assume that $G \hookrightarrow B$ for some right angled Artin group $B$, corresponding to a
finite simplicial graph $\ga'$. Evidently, since $F \leqslant G$, we have $F \in \cA$.

Note that  $A \coloneq \pc_{\ga'}(G)$ is a right angled Artin group, corresponding to some full subgraph $\ga$ of $\ga'$. Moreover, the parabolic closure $\pc_{\ga}(G)$, of $G$ in $A$, is $A$ itself.
This is because every parabolic subgroup of $A$ is a parabolic subgroup of $B$ and $A$ is the minimal parabolic subgroup of $B$ containing $G$, by definition.
Britton's lemma for HNN-extensions (see \cite[Sec. IV.2]{L-S}) easily yields that $C_F(t)=N$.
Therefore all the assumptions of Proposition \ref{prop:emb_quot} are satisfied and we can deduce that $F/N \hookrightarrow A$. Thus (b) implies (a).
\end{proof}

We are now ready to prove Theorem \ref{thm:rope} from the Introduction.

\begin{proof}[Proof of Theorem \ref{thm:rope}] Claim (i) is an immediate consequence of Lemma \ref{lem:spec-HNN-emb}. Suppose that $G \hookrightarrow A$ for some right angled Artin group $A$.
Abusing the notation, let us identify $G$ with its isomorphic image in $A$.  Arguing as in the proof of Corollary \ref{cor:rope-RAAG}, we can replace $A$ with a parabolic subgroup to ensure that $A$ is the parabolic closure of $G$.
As before, we know that $C_F(t)=N$ by Britton's lemma, hence $F/N$ embeds in $A$  by  Proposition \ref{prop:emb_quot}, and claim (ii) holds.
\end{proof}

It is not difficult to see that the group $G_d$ from Corollary \ref{cor:G_d}, formulated in the Introduction, is not finitely presented for any $d \in \N$.
This is because the normal subgroup $N \lhd F$, with $F/N \cong H_d$, cannot be finitely generated
(as it is both infinite and has infinite index in the free group $F$ -- see \cite[Prop. I.3.12]{L-S}), and the special HNN-extension \eqref{eq:spec-HNN} is finitely presented if and only if $F$ is finitely presented
and $N$ is finitely generated. One can overcome this obstacle by using the famous construction of Rips \cite{Rips}, where the free group is replaced by a hyperbolic group.

\begin{cor}\label{cor:fp_ex} For every $d \in \N$ there is a finitely presented group $P_d \in \cA$ such that $\cd(P_d) \le 3$ but any right angled Artin group $A$, with $P_d \hookrightarrow A$,
satisfies $\cd(A)\ge d$.
\end{cor}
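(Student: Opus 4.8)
The plan is to mimic the construction of the groups $G_d$ from Corollary~\ref{cor:G_d}, but with the rank-$3$ free group $F$ replaced by a \emph{finitely presented} group admitting a \emph{finitely generated} normal subgroup whose quotient still has large cohomological dimension. The Rips construction is designed exactly for this.

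First I would fix $d \in \N$ and apply (a version of) the Rips construction to the finitely presented group $\Z^d$. This yields a short exact sequence
\[
1 \longrightarrow N \longrightarrow \Gamma \stackrel{\pi}{\longrightarrow} \Z^d \longrightarrow 1,
\]
where $\Gamma$ is a finitely presented hyperbolic group whose defining relators may be assumed not to be proper powers (so its presentation $2$-complex is aspherical and $\cd(\Gamma)\le 2$), and where $N=\ker\pi$ is finitely generated. The key extra feature I need is that $\Gamma$ can be taken inside the class $\cA$. For the classical $C'(1/6)$ Rips construction this follows by combining Wise's cubulation of small-cancellation groups with Agol's theorem, which together show that $\Gamma$ is virtually special, hence has a normal finite-index subgroup $\Gamma_1\lhd\Gamma$ lying in $\cA$; replacing $\Gamma$ by $\Gamma_1$ and $N$ by $N_1\coloneq N\cap\Gamma_1$, one still has $\Gamma_1$ finitely presented, $\cd(\Gamma_1)\le 2$, $N_1\lhd\Gamma_1$ finitely generated (of finite index in $N$), and $\Gamma_1/N_1\cong\Gamma_1N/N$ a finite-index subgroup of $\Z^d$, hence again $\cong\Z^d\in\cA$. (Alternatively, one may invoke a version of the Rips construction whose output is already the fundamental group of a compact special cube complex; see \cite{Rips,H-W,Wise-QH,Agol}.) Relabelling, write $F$ for this group, so that $F\in\cA$ is finitely presented with $\cd(F)\le 2$, and $N\lhd F$ is finitely generated with $F/N\cong\Z^d$.

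Next I would set $P_d$ to be the special HNN-extension of $F$ with respect to $N$, given by presentation~\eqref{eq:spec-HNN}. Because $F$ is finitely presented and $N$ is finitely generated, $P_d$ is finitely presented. Since $F\in\cA$ and $F/N\cong\Z^d\in\cA$, Lemma~\ref{lem:spec-HNN-emb} (applied with $B=\Z^d$) gives $P_d\in\cA$. To estimate $\cd(P_d)$, I would use that $P_d$ acts on its Bass--Serre tree with vertex stabilizers conjugate to $F$ and edge stabilizers conjugate to $N\leqslant F$; the Mayer--Vietoris sequence for this HNN-extension then yields $\cd(P_d)\le\max\{\cd(F),\cd(N)+1\}\le\max\{2,3\}=3$ (cf.\ \cite[Ch.~VIII]{Brown}). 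Finally, if $A$ is any right angled Artin group with $P_d\hookrightarrow A$, then Theorem~\ref{thm:rope}(ii) forces $F/N\cong\Z^d\hookrightarrow A$, so $\cd(A)\ge\cd(\Z^d)=d$. This is exactly the assertion of the corollary.

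The main obstacle is the very first step: one must arrange that the Rips construction outputs a group in $\cA$ rather than a merely hyperbolic group. This is where the heavy input (cubulation of small-cancellation groups and the virtual specialness of cubulated hyperbolic groups, or a bespoke ``special'' Rips construction) enters; the subsequent passage from a virtually special $\Gamma$ to a finite-index normal subgroup $\Gamma_1\in\cA$ is an elementary clean-up, and once $F$ and $N$ are in hand the remaining verifications --- finite presentability of $P_d$, the embedding $P_d\in\cA$, the bound $\cd(P_d)\le 3$, and the appeal to Theorem~\ref{thm:rope} --- are all routine.
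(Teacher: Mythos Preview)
Your proposal is correct and follows essentially the same approach as the paper's proof: apply a Rips-type construction to $\Z^d$ to obtain a finitely presented $2$-dimensional group $F\in\cA$ with a finitely generated normal subgroup $N$ and $F/N\cong\Z^d$, then take $P_d$ to be the special HNN-extension and invoke Theorem~\ref{thm:rope}. The paper leads with the Haglund--Wise ``special'' Rips construction (and mentions the classical Rips $+$ Wise/Agol route parenthetically), while you lead with the latter and mention the former as an alternative; the passage to a finite-index subgroup in $\cA$, the finite presentability of $P_d$, and the bounds on cohomological dimension are handled identically.
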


\begin{proof} Take any $d \in \N$. Let us use the modification of Rips's construction suggested by Haglund and Wise in \cite{H-W}
(in view of the results of Wise \cite[Thm.~1.2]{Wise} and  Agol \cite[Thm.~1.1]{Agol} we can also use the original construction of Rips from \cite{Rips}).
Namely, according to \cite[Thm.~10.1]{H-W},  there is a group $S$ with a normal subgroup $K \lhd S$ such that $S$ is the fundamental group of some compact non-positively curved square complex,
$K$ is finitely generated and $S/K \cong \Z^d$. Moreover, Theorems 5.5, 5.7 and 4.2 from \cite{H-W} imply that $S$ contains a finite index subgroup $F$ such that $F \in \cA$.

Let $N \lhd F$ be the intersection of $F$ and $K$. Note that $N$ is finitely generated as $|K:N|\le |S:F|<\infty$,
and  $F/N$ is still isomorphic to the free abelian group of rank $d$ (because it has finite index in $S/K $). Now, let $P_d$ be the special HNN-extension
of $F$ with respect to $N$. By Theorem \ref{thm:rope}, $P_d \hookrightarrow \cA$ and for any right angled Artin group $A$, containing $P_d$, one has $\Z^d \hookrightarrow A$, hence $\cd(A) \ge d$.

Let  $X \subseteq N$ be some finite generating set of $N$. Then $P_d$ can be defined by the (relative) presentation
\begin{equation}\label{eq:pres}
P_d=\langle F,t \,\|\, txt^{-1}=x,~\forall\,x \in X \rangle.
\end{equation}
Observe that $S$ is finitely presented, as the fundamental group of a finite square complex, hence $F$ is finitely presented as well. Equation \eqref{eq:pres} shows that a presentation for $P_d$ can be obtained from a finite presentation
of $F$ by adding one generator and finitely many defining relations, thus $P_d$ is also finitely presented.

Finally, $\cd(S) \le 2$ as $S$ is the fundamental group of a $2$-complex with contractible universal cover, therefore $\cd(F)\le 2$. And since $P_d$ is an HNN-extension of $F$, we can conclude that
$\cd(P_d) \le \cd(F)+1=3$  -- see  \cite[Ex. 4 in Ch. VIII.2]{Brown}.
\end{proof}

Corollary \ref{cor:fp_ex} demonstrates that there is no ``universal'' right angled Artin group for the class of all finitely presented groups $P \in \cA$ with $\cd(P) \le 3$.
However, the main result of Kim and Koberda from \cite{K-K-obstr} implies that an even stronger fact is true: there does not exist a right angled Artin group which contains copies of all
$2$-dimensional right angled Artin groups (see also \cite[Thm. 1.16]{K-K} for a weaker statement). Indeed, one just needs to recall that
every right angled Artin group embeds into the mapping class group Mod($\Sigma$), of some orientable compact surface $\Sigma$ (cf. \cite[Prop. 1.3]{Koberda}), and combine this with \cite[Thm. 1.2]{K-K-obstr},
asserting that for every such surface $\Sigma$ there exists a right angled Artin group $A$, with $\cd(A)=2$, such that $A \not\hookrightarrow \mbox{Mod($\Sigma$)}$.

\medskip
\noindent {\bf Acknowledgements.} The author would like to thank Montserrat Casals-Ruiz, Fran\c cois Dahmani and Nansen Petrosyan for discussions, and Alexander Olshanskii for suggesting the idea to use
the Golod-Shafarevich theorem in the proof of Proposition \ref{prop:vsp}. The author is also grateful to the referee for bringing the paper \cite{K-K-obstr} to his attention.


\begin{thebibliography}{99}
\bibitem{Agol} I. Agol,
The virtual Haken conjecture. With an appendix by I. Agol, D. Groves and J. Manning.
{\it Documenta Math}. \textbf{18} (2013), 1045--1087.

\bibitem{A-M}
Y. Antol\'{i}n, A. Minasyan,
Tits alternatives for graph products.
\emph{J. Reine Angew. Math.}, to appear.

\bibitem{A-M-S} Y. Antol\'{i}n, A. Minasyan, A. Sisto, Commensurating endomorphisms of acylindrically hyperbolic groups and applications. Preprint (2013).\texttt{ arXiv:1310.8605}

\bibitem{Baud} A. Baudisch, Subgroups of semifree groups.
\emph{Acta Math. Acad. Sci. Hungar.} \textbf{38} (1981), no. 1-4, 19--28.

\bibitem{B-M-R} G. Baumslag, A. Myasnikov, V. Remeslennikov, Algebraic geometry over groups. I. Algebraic sets and ideal theory.
\emph{J. Algebra} \textbf{219} (1999), no. 1, 16--79.

\bibitem{B-H-M-S}  M.R. Bridson, J. Howie, C.F. Miller III, H. Short,  On the finite presentation of subdirect products and the nature of residually free groups. \emph{Amer. J. Math.} \textbf{135} (2013), no. 4, 891--933.

\bibitem{B-M} M.R. Bridson, C.F. Miller III, Structure and finiteness properties of subdirect products of groups. \emph{Proc. Lond. Math. Soc. (3)} \textbf{98} (2009), no. 3, 631--651.

\bibitem{Brown} K.S. Brown, Cohomology of groups. Graduate Texts in Mathematics, 87. \emph{Springer-Verlag, New York-Berlin}, 1982. x+306 pp.

\bibitem{C-K} M. Casals-Ruiz, I. Kazachkov, Limit groups over partially commutative groups and group actions on real cubings. \emph{Geom. Topol.}, to appear.

\bibitem{Charney}  R. Charney, An introduction to right-angled Artin groups. \emph{Geom. Dedicata} \textbf{125} (2007), 141--158.

\bibitem{Ershov}  M. Ershov, Golod-Shafarevich groups: a survey. \emph{Internat. J. Algebra Comput.} \textbf{22} (2012), no. 5, 1230001.

\bibitem{Grig} R. Grigorchuk, On the Hilbert-Poincar\'e series of graded algebras that are associated with groups. (Russian) \emph{Mat. Sb.} \textbf{180} (1989), no. 2, 207--225, 304. English translation in
\emph{Math. USSR-Sb.} \textbf{66} (1990), no. 1, 211--229.

%
%\bibitem{Gromov} M. Gromov,
%Hyperbolic groups. \emph{Essays in group theory}, 75-263,
%Math. Sci. Res. Inst. Publ., \textbf{8}, \emph{Springer, New York,} 1987.

\bibitem{H-W} F. Haglund, D.T. Wise, Special cube complexes.
\emph{Geom. Funct. Anal.} \textbf{17} (2008), no. 5, 1551--1620.


\bibitem{Higman} G. Higman, Subgroups of finitely presented groups.
\emph{Proc. Roy. Soc. Ser. A} \textbf{262} (1961),  455--475.

\bibitem{H-N-N} G. Higman, B.H. Neumann, H. Neumann, Embedding theorems for groups.
\emph{J. London Math. Soc.} \textbf{24}, (1949). 247--254.

\bibitem{Kh-M} O. Kharlampovich, A. Myasnikov, Irreducible affine varieties over a free group. II. Systems in triangular
quasi-quadratic form and description of residually free groups. \emph{J. Algebra} \textbf{200} (1998), no. 2, 517--570.

\bibitem{K-K-obstr} S-h. Kim, T. Koberda, An obstruction to embedding right-angled Artin groups in mapping class groups, \emph{Int. Math. Res. Notices} \textbf{14} (2014), no 14, 3912--3918.

\bibitem{K-K} S-h. Kim, T. Koberda,  Embedability between right-angled Artin groups.
\emph{Geom. Topol.} \textbf{17} (2013), no. 1, 493--530.

\bibitem{Koberda} T. Koberda, Right-angled Artin groups and a generalized isomorphism problem for finitely generated subgroups of mapping class groups.
\emph{Geom. Funct. Anal.} \textbf{22} (2012), no. 6, 1541--1590.

\bibitem{L-S} R.C. Lyndon, P.E. Schupp, Combinatorial group theory. Ergebnisse der Mathematik und ihrer Grenzgebiete,
Band 89. \emph{Springer-Verlag, Berlin-New York}, 1977. xiv+339 pp.

\bibitem{M-RAAG} A. Minasyan, Hereditary conjugacy separability of right angled Artin groups and its applications. \emph{Groups Geom. Dyn.} \textbf{6} (2012), no. 2, 335--388.

\bibitem{Rips}  E. Rips, Subgroups of small cancellation groups. \emph{Bull. London Math. Soc.} \textbf{14} (1982), no. 1, 45--47.

\bibitem{Sela} Z. Sela, Diophantine geometry over groups. I. Makanin-Razborov diagrams.  \emph{Publ. Math. Inst. Hautes \'Etudes Sci.} No. 93 (2001), 31--105.

\bibitem{Serv} H. Servatius, Automorphisms of graph groups. \emph{J. Algebra} \textbf{126} (1989), no. 1, 34--60.

\bibitem{Wise}  D.T. Wise, Cubulating small cancellation groups. \emph{Geom. Funct. Anal.} \textbf{14} (2004), no. 1, 150--214.

\bibitem{Wise-QH} D.T. Wise, The structure of groups with a quasiconvex hierarchy.
Preprint (2011). Available from \\
\url{http://www.math.mcgill.ca/wise/papers.html}
\end{thebibliography}
\end{document}